\newtheorem{theorem}{Theorem}[section]
\newtheorem{lemma}[theorem]{Lemma}
\newtheorem{proposition}[theorem]{Proposition}
\newtheorem{remark}[theorem]{Remark}
\newcommand{\Glimsup}{\mathop{\textrm{$\Gamma$-$\limsup$}}\displaylimits}
\newcommand{\Gliminf}{\mathop{\textrm{$\Gamma$-$\liminf$}}\displaylimits}
\numberwithin{equation}{section}
\newcommand{\id}{{1 \mskip -5mu {\rm I}}}
\newcommand{\mc}[1]{{\mathcal #1}}
\newcommand{\bb}[1]{{\mathbb #1}}
\renewcommand{\epsilon}{\varepsilon}
\renewcommand{\phi}{\varphi}
\renewcommand{\hat}{\widehat}
\newcommand{\upbar}[1]{\,\overline{\! #1}}
\newcommand{\f}{\varphi}
\newcommand{\R}{\bb R}
\definecolor{light}{gray}{.9}
\title[Stationary solutions of Burgers equation]{A variational
  approach to the stationary solutions of Burgers equation}
\author [L.\ Bertini] {Lorenzo Bertini}
\address{\noindent Lorenzo Bertini \hfill\break\indent 
Dipartimento di Matematica, Universit\`a di Roma `La Sapienza' 
\hfill\break\indent 
P.le Aldo Moro 2, 00185 Roma, Italy}
\email{bertini@mat.uniroma1.it}
\author [M. Ponsiglione] {Marcello Ponsiglione}
\address{\noindent Marcello Ponsiglione\hfill\break\indent 
Dipartimento di Matematica, 
Sapienza  Universit\`a di Roma 
\hfill\break\indent 
P.le Aldo Moro 2, 00185 Roma, Italy}
\email{ponsigli@mat.uniroma1.it}
\begin{document}

\keywords{Burgers equation, Quasi-potential, Variational convergence}

\subjclass[2000]{Primary 35A15, 34B40; Secondary 35B40, 82B24}

\begin{abstract}
  Consider the viscous Burgers equation on a bounded interval with
  inhomogeneous Dirichlet boundary conditions.  
  Following the variational fra\-mework introduced in \cite{bd}, we
  analyze a Lyapunov functional for such equation which gives the
  large deviations asymptotics of a stochastic interacting particles
  model associated to the Burgers equation. We discuss the asymptotic
  behavior of this energy functional, whose minimizer is given by the
  unique stationary solution, as the length of the interval diverges.
  We focus on boundary data corresponding to a standing wave solution
  to the Burgers equation in the whole line. In this case, the
  limiting functional has in fact a one-parameter family of minimizers
  and we analyze the so-called development by $\Gamma$-convergence;
  this amounts to compute the sharp asymptotic cost corresponding to a
  given shift of the stationary solution.
\end{abstract}

\maketitle
\thispagestyle{empty}

\section{Introduction}
\label{sec0}

Consider the viscous Burgers equation on the interval $(a,b)\subseteq
(-\infty,+\infty)$ with inhomogeneous Dirichlet boundary conditions at
the endpoints
\begin{equation}
  \label{eq:1}
  \begin{cases}
    u_t + f(u)_x =  u_{xx}\,, 
    \\
    u(t,a) = u_- \,,\quad u(t,b) = u_+ 
    \;,
  \end{cases}
\end{equation}
where $u=u(t,x)$ satisfies $0\le u \le 1$, the \emph{flux} $f$ is the
function defined by $f(u):= u(1-u)$ and the boundary data satisfy
$0<u_-<u_+<1$. 
If the interval $(a,b)$ is bounded, it is simple to show that there
exists a unique stationary solution of \eqref{eq:1} that can be
computed explicitly. On the other hand, if we consider the case
$(a,b)=(-\infty,+\infty)$ and $u_-+u_+=1$, there exists a
\emph{standing wave} solution of Burgers equation on the whole line.
Accordingly, in this case  problem \eqref{eq:1} admits a one
parameter family of stationary solutions $\{\upbar{u}^{(z)}\,,\:z\in
\bb R\}$ which is obtained by considering the translations of the
standing wave $\upbar{u}$ satisfying
$\lim_{x\to\pm\infty}\upbar{u}(x)=u_\pm$ and $\upbar{u}(0)=1/2$.  
If we consider the case $u_-+u_+=1$ in the bounded symmetric interval
$(a,b)=(-\ell,\ell)$ and denote by $\upbar{u}_\ell$ the unique
stationary solution to \eqref{eq:1}, as $\ell$ diverges the sequence
$\{\upbar{u}_\ell\}$ converges to the stationary solution
$\upbar{u}$. We refer to \cite{GK} for a dynamical analysis of
\eqref{eq:1}.

The main topic we here discuss is the following.  Consider the case
$u_-+u_+=1$ and fix $z\in\bb R$. If we take $\ell$ large we expect
that there exists some function $u_\ell^{(z)}$ close to
$\upbar{u}^{(z)}$ such that $u_\ell^{(z)}$ is ``almost'' a stationary
solution to \eqref{eq:1} in the interval $(-\ell,\ell)$.  We shall
quantify being an ``almost'' stationary solution in terms of a
suitable family of energy functionals measuring the probability of
observing such fluctuations and compute the sharp asymptotic of the
energy of $u^{(z)}_\ell$.

Since the Burgers equation is not a gradient flow, the choice of the
energy functional is not trivial.  Consider the problem \eqref{eq:1}
in the bounded interval $(a,b)$ and denote by $V_{a,b}$, which also
depends on $u_\pm$, an associated energy functional. Let us first make
a short list of the properties that $V_{a,b}$ should enjoy:
\begin{itemize}
\item[\emph{(i)}]
the unique minimizer of $V_{a,b}$ is the stationary solution to
\eqref{eq:1};
\item[\emph{(ii)}] $V_{a,b}$ is a Lyapunov functional for the flow
defined by \eqref{eq:1};
\item[\emph{(iii)}] as $(a,b)$ diverges the functional $V_{a,b}$ converges to
  the functional $V_{-\infty,+\infty}$ associated to \eqref{eq:1} in
  the whole line.
\end{itemize}
Of course, this list still gives a lot of freedom.  However, as we
next discuss, there is a natural way to meet the requirements \emph{(i)} and
\emph{(ii)} above with some energy functional $V_{a,b}$ that has a clear
interpretation in terms of large deviations, while property \emph{(iii)} will
be proven in this paper.

First we associate to \eqref{eq:1} an action functional $I_{a,b}$,
defined on functions depending on space and time.  To this end, add an
external ``controlling'' field $E=E(t,x)$ to obtain the perturbed
equation
\begin{equation}
  \label{pbe}
  \begin{cases}
    v_t + f(v)_x +2\big( \sigma(v) E\big)_x =  v_{xx}, 
    \\
    v(t,a) = u_- \,,\quad v(t,b) = u_+ 
    \;,
  \end{cases}
\end{equation}
where $\sigma$ is a given positive function which, regarding $v$ as a
density, can be interpreted as the mobility of the system.  Denote by
$v^E$ the solution of this equation.  The action of a path
$v:(-\infty,0]\times [a,b]\to [0,1]$ is given by
\begin{equation}
\label{f14}
I_{a,b}(v) \;=\; \inf
\int_{-\infty}^0 \int_a^b \sigma(v) \, E^2 \, dx\,dt\;,
\end{equation}
where the infimum is carried over all $E$ such that $v^E=v$.  
Note that if $v$ is a solution to \eqref{eq:1} then $I_{a,b}(v)=0$. 
Consider now the so-called \emph{quasi-potential} \cite{FW} associated
to the action functional $I_{a,b}$, that is let $V_{a,b}$ be the
functional on the set of functions $u: [a,b] \to [0,1]$ defined by
\begin{equation}
\label{qpgen}
V_{a,b}(u) \;=\; \inf\, \big\{ I_{a,b}(v)\,:\: v(0)=u\,,\:
v(t)\to\upbar{u}_{a,b} \textrm{ as } t\to -\infty \big\}\;,
\end{equation}
where $\upbar{u}_{a,b}$ is the stationary solution to \eqref{eq:1}.
Namely, $V_{a,b}(u)$ is the minimal action to reach $u$ starting from
$\upbar{u}_{a,b}$. Of course, $V_{a,b}\ge 0$ and
$V_{a,b}(\upbar{u}_{a,b})=0$; it is also simple to check that
$V_{a,b}$ is a Lyapunov functional for \eqref{eq:1}.

The functional $V_{a,b}$ obtained by the previous general recipe
depends on the choice of the mobility $\sigma$. If the boundary data
are equal, $u_-=u_+=u_\circ$, then the stationary solution is
constant, $\upbar{u}=u_\circ$. In this case, it can be shown that the
quasi-potential is given by
\begin{equation}
  \label{vom}
  V_{a,b}(u) \;=\; \int_a^b s_{u_\circ}(u) \, dx \:, 
\end{equation}
where $s_{u_\circ}:[0,1]\to [0,+\infty)$ is the convex function such
that $s_{u_\circ}''(u)=1/\sigma(u)$ and $s_{u_\circ}(u_\circ) =
s_{u_\circ}'(u_\circ)=0$. Referring to \cite{BCM} for the proof of
\eqref{vom} in the case of periodic boundary conditions, we simply
observe that the functional in \eqref{vom} trivially satisfies the
requirements \emph{(i)}, \emph{(ii)}, and \emph{(iii)} above.

In the case of inhomogeneous boundary data $u_-\neq u_+$, the
quasi-potential $V_{a,b}$ is in general a nonlocal functional and, as
its definition involves the solution of a difficult dynamical problem,
its direct analysis does not appear feasible.  For the specific case
of the Burgers equation here considered and the choice
$\sigma(v)=v(1-v)$, in \cite{bd} it is shown that the quasi-potential
$V_{a,b}$ can be expressed in terms of a much simpler variational
problem which requires to optimize over functions of a single variable
rather than on all paths as in \eqref{qpgen}.

When $\sigma(v)=v(1-v)$ the action functional $I_{a,b}$ introduced in
\eqref{f14} is the dynamical large deviations rate functional of a
much studied stochastic model of interacting particles, the so-called
weakly asymmetric simple exclusion process \cite{KOV}.  Accordingly,
the quasi-potential $V_{a,b}$ describes the asymptotic behavior of the
corresponding invariant measure \cite{FW}.  More precisely, if we
denote by $\mu^N_{a,b}$ the invariant measure of the stochastic particles
model in the interval $(a,b)$ with lattice spacing $1/N$, then as
$N\to+\infty$ we have
\begin{equation}
  \label{ld}
  \mu^N_{a,b}( \mc B ) 
  \;\asymp\; \exp\big\{ - N \inf_{u\in\mc B} V_{a,b} (u)\big\}\;, 
\end{equation}
where $\mc B$ is a measurable subset of the configuration space.  In
particular, the probability on the left hand side converges to one as
$N\to \infty$ only if the global minimizer of $V_{a,b}$ lies in the
set $\mc B$. If otherwise $\upbar{u}_{a,b}\not\in \mc B$ the large
deviation formula \eqref{ld} expresses the fact that the probability
of $\mc B$ converges to zero exponentially fast in $N$ with rate given
by the infimum of $V_{a,b}$ on the set $\mc B$.  Within this context,
which takes into account the effect of fluctuations, we are thus
interested not only to the global minimizer of $V_{a,b}$, but also to
its minimizers in subsets of the function space.  A natural question is
the asymptotic behavior of the probability $\mu^N_{a,b}$ in the joint
limit in which $N\to\infty$ and the interval $(a,b)$ diverges.  A
simple approach to this issue, which corresponds to take first the
limit $N\to\infty$ and then letting the interval $(a,b)$ diverge, is
to analyze the variational convergence of $V_{a,b}$.

As the results in \cite{bd} are the starting point of the present
analysis, we briefly recall the main statement.  Given $p\in [0,1]$,
set $s(p)=p\log p +(1-p)\log(1-p)$. Note that $s''(p)=1/[p(1-p)]$ so
that $s$ can be regarded as the entropy function of the homogeneous
system; in the stochastic setting this function emerges naturally as
the Bernoulli entropy.  To the boundary data $0<u_-<u_+<1$ there
correspond the chemical potentials $\phi_\pm=s'(u_\pm)= \log [
{u_\pm}/({1-u_\pm})]\in \bb R$.  Given the bounded interval $[a,b]$
define the functional $\mc G_{a,b}$ of the two variables $u=u(x)$ and
$\phi=\phi(x)$, $x\in [a,b]$, as
\begin{equation}
  \label{Gint}
  \mc G_{a,b}(u,\phi) \;=\;
  \int_a^b \big[ 
  s(u) +  s(\phi') + (1-u) \phi -\log\big(1+e^\phi\big)
  \big]\, dx\;,
\end{equation}
where $\phi$ satisfies $0\le \phi'\le 1$ as well as $\phi(a)=\phi_-$
and $\phi(b)=\phi_+$.  Optimize now in $\phi$ to get a functional $\mc
F_{a,b}$ of $u$
\begin{equation}
  \label{Fint}
  \mc F_{a,b} (u) \; =\; 
  \inf_\phi \, \mc G_{a,b} (u,\phi)\;.
\end{equation}
In \cite{bd} it is proven that, apart an additive constant, the
quasi-potential is equal to $\mc F_{a,b}$, namely
\begin{equation}
  \label{V=F}
  V_{a,b} (u) \;=\; \mc F_{a,b}(u) \; -\; \inf \,\mc F_{a,b} \:. 
\end{equation}
Observe that the boundary data $u_\pm$ are passed to $\mc F_{a,b}$
thought the auxiliary function $\phi$. 
In particular, while the functional $\mc F_{a,b}$ is bounded on the
whole $L^\infty((a,b);[0,1])$, its minimizer is smooth and satisfies
the boundary conditions in \eqref{eq:1}.

The aim of this paper is to analyze the asymptotic behavior, in terms
of $\Gamma$-convergence \cite{Braides,Da}, of the functionals $\mc
F_{a,b}$ when the boundary data $u_\pm$ are fixed and the interval
$(a,b)$ diverges.  Although the functionals are quite different, some
of the arguments in the proofs of our results are similar to the ones
used in the analysis of the analogous problem for the van der Waals
free energy functional in a bounded interval \cite{bbb}.

In the case $u_-+u_+ >1$, the stationary solution to \eqref{eq:1} will
essentially make the transition from $u_-$ to $u_+$ close the left
endpoint $a$; note indeed that in this case the Burgers equation on
the whole line admits a travelling wave propagating towards the left.
We thus set $(a,b)=(0,\ell)$ and analyze the sequence of functionals
$\{\mc F_{0,\ell}\}$; we prove its $\Gamma$-convergence to a limiting
functional $\mc F_{0,+\infty}$ which is basically defined as in the case
of a bounded interval. 
In this situation, $\{\mc F_{0,\ell}\}$ has good coerciveness
properties to ensure the compactness of sequences with equibounded
energy.  Since the unique minimizer of $\mc F_{0,+\infty}$ is
given by the stationary solution $\upbar{u}_{0,+\infty}$ to
\eqref{eq:1} in the unbounded interval $(0,+\infty)$, the minimizer of
$\mc F_{0,\ell}$ converges  to $\upbar{u}_{0,+\infty}$.  Of course,
analogous results hold when $u_-+u_+ <1$.

In contrast, the case $u_-+u_+ =1$ is much richer. We consider the
symmetric interval $(a,b)=(-\ell,\ell)$ and analyze the asymptotic
behavior of the sequence of functionals $\{\mc F_{-\ell,\ell}\}$.  The
first result, that is the $\Gamma$-convergence to a limiting
functional $\mc F_{-\infty,+\infty}$, is analogous to the previous
case.  However, when $u_-+u_+ =1$, the functional $\mc F_{-\infty,+\infty}$ has a one
parameter family of minimizers, given by the stationary solutions to 
\eqref{eq:1} in the interval $(-\infty,+\infty)$.
For this reason, the sequence $\{\mc F_{-\ell,\ell}\}$ is not
equi-coercive: there are sequence $\{u_\ell\}$ such that $\mc
F_{-\ell,\ell} (u_\ell) \to \inf \mc F_{-\infty,+\infty}$ and
$z_\ell\to\infty$, where $z_\ell$ is the point such that
$u_\ell(z_\ell)=1/2$. We show that equi-coercivity of $\{\mc
F_{-\ell,\ell}\}$ is recovered if we identify functions that differ by
a translation.  In particular, since modulo translations $\mc
F_{-\infty,+\infty}$ has a unique minimizer, the shape of almost
minimizers for $\{\mc F_{-\ell,\ell}\}$ is rigid.

Let $\upbar{u}_\ell$ be the true minimizer of $\mc F_{-\ell,\ell}$.
As discussed before, the sequence $\{\upbar{u}_\ell\}$ converges to
$\upbar{u}$, the stationary solution of \eqref{eq:1} in the
interval $(-\infty,+\infty)$ such that $\upbar{u}(0)=1/2$.
The previous statement cannot be deduced from the $\Gamma$-convergence
of $\{\mc F_{-\ell,\ell}\}$. On the other hand, as it is customary in
those problems having a limiting functional with plenty of minimizers,
a variational description of this phenomenon is possible considering
the so-called development by $\Gamma$-convergence \cite{ab}.
More precisely, we introduce a rescaled excess energy $\mc
F^{(1)}_{-\ell,\ell}$   by setting 
\begin{equation}
  \label{dGc}
  \mc F^{(1)}_{-\ell,\ell} (u) 
  \;=\;   C(\ell)  
  \Big[ \mc F_{-\ell,\ell} (u) -\inf  \mc F_{-\infty,+\infty} \Big],
\end{equation}
and we look for a sequence $C(\ell)\to+\infty$ for which 
$\{ \mc F^{(1)}_{-\ell,\ell}\}$ has a non trivial $\Gamma$-limit.
Let $\alpha\in(0,1)$ be such that $u_\pm=(1\pm\alpha)/2$. We show that
the right choice of the rescaling is $C(\ell)=e^{\alpha\ell}$; this is
consistent with the fact that the stationary solution
$\upbar{u}$ approaches the asymptotic values $u_\pm$
exponentially fast. Then, we compute the corresponding  $\Gamma$-limit 
$\mc F^{(1)}_{-\infty,+\infty}$. Of course, $\mc
F^{(1)}_{-\infty,+\infty}(u)<+\infty$ only if $u=\upbar{u}^{(z)}$
for some $z\in\bb R$ and its explicit expression is given by 
\begin{equation}
  \label{f1exp}
  \mc F^{(1)}_{-\infty,+\infty}(u^{(z)}) \;= \; 
  \frac{8\alpha}{1-\alpha^2} \cosh(\alpha z). 
\end{equation}
In particular, since $\upbar{u}$ is the unique minimizer of $\mc
F^{(1)}_{-\infty,+\infty}$, the variational picture in terms of
development by $\Gamma$-convergence is complete.  
In general, the excess energy $e^{-\alpha\ell} \mc
F^{(1)}_{-\infty,+\infty}(\upbar{u}^{(z)})$ represents the cost for
shifting by $z$ the stationary solution $\upbar{u}_\ell$. In terms of
the large deviation formula \eqref{ld}, it gives the asymptotic
probability of a fluctuation close to $\upbar{u}^{(z)}$ as
$N\to+\infty$ and then $\ell\to+\infty$
\begin{equation}
  \label{ld1}
  \mu^N_{-\ell,\ell}( \mc O^{(z)} ) 
  \;\asymp \;\exp\Big\{ - N e^{-\alpha\ell} 
 \frac{8\alpha}{1-\alpha^2} 
  \big[ \cosh(\alpha z) -1 
  \big]
  \Big\} \:,
\end{equation}
where $\mc O^{(z)}$ is small neighborhood of $\upbar{u}^{(z)}$.

We finally briefly discuss the sharp interface setting.  This amounts
to the change of variable $x\mapsto x/\ell$, so that one considers the
Burgers equation \eqref{eq:1} in the fixed interval $(-1,1)$ with
viscosity $\epsilon=1/\ell$.  The asymptotic behavior of the energy
functionals can be clearly described also in the limit $\epsilon\to 0$,
see \cite{bd}.  In this setting stationary 
solutions to \eqref{eq:1} converge to step functions. Note that fluctuations
which are of order one in the unscaled variables are not seen
in the sharp interface limit.  
In particular, the $\Gamma$-limit \eqref{f1exp} of the rescaled excess
energy translated into the sharp interface setting becomes degenerate,
being infinite away from the minimizer. 
Even choosing a different rescaling in \eqref{dGc}, i.e.\ replacing
$e^{\alpha/\epsilon}$ with $e^{\beta/\epsilon}$, $\beta\in
(0,\alpha)$, we would still get a degenerate $\Gamma$-limit.
More precisely, with such a choice the $\Gamma$-limit would be zero
if the interface is at distance less than $1-\beta/\alpha$ from the
origin and infinite otherwise.

\section{The variational formulation}

In this section we introduce precisely the variational formulation for
stationary solutions to Burgers equation on bounded intervals and show
uniqueness of minimizers.
Fix a bounded interval $(a,b)\subset (-\infty,+\infty)$. Recalling that  the
flux is given by $f(u)=u(1-u)$, the stationary solution
$\upbar{u}_{a,b}$ to the viscous Burgers equation \eqref{eq:1} solves
the boundary value problem
\begin{equation}
  \label{ub}
  \begin{cases}
    u'' - [u(1-u)]'  =0 & x\in \bb (a,b)\;,  \\
    u(a) =u_- \,,\quad u(b) = u_+ \;,
  \end{cases}
\end{equation}
where $0<u_-<u_+<1$.
This problem admits a monotone solution, that satisfies the identity
\begin{equation}
  \label{curr}
  u' = u(1-u) -  J_{a,b} \:,
\end{equation}
where the \emph{current} $J_{a,b}$ is the constant determined by the
boundary conditions, i.e.\ it satisfies
\begin{equation}
  \label{condcurr}
  \int_{u_-}^{u_+} \frac{1}{r(1-r)-J_{a,b}} \, dr = b-a\;.
\end{equation}
Observe that $J_{a,b}$ is uniquely defined by the previous condition. 
Moreover $J_{a,b} < \min\{u_-(1-u_-),u_+(1-u_+)\}\le 1/4$ and $J_{a,b} >0$
as soon as $b-a > \log [u_+/(1-u_+)] -\log [u_-/(1-u_-)]$.  
In particular, from \eqref{curr} and \eqref{condcurr} we deduce
the uniqueness of the solution for the boundary value problem \eqref{ub}. 
Equation \eqref{curr} can be explicitly integrated getting
\begin{equation}
  \label{ubarab}
  \upbar{u}_{a,b} (x) = 
  \tfrac 12 + A_{a,b} \tanh \big[ A_{a,b} (x-x_{a,b}) \big]\;,
\end{equation}
where $A_{a,b} = \big(\frac 14 -J_{a,b}\big)^{\frac 12}$ and
$x_{a,b}\in \bb R$ is determined by imposing $\upbar{u}_{a,b}
(a)=u_-$.  In the sequel we consider only the cases $u_-+u_+=1$,
$(a,b)=(-\ell,\ell)$ and $u_-+u_+ \gtrless 1$, $(a,b)=\pm (0,\ell)$.
The corresponding solutions to \eqref{ub} are denoted by
$\upbar{u}_\ell$ and $\upbar{u}_\ell^\pm$, respectively.

The stationary solution to \eqref{eq:1} in the case of an unbounded
interval can be described analogously.
Given $0<u_-<u_+<1$ such that $u_-+u_+ \gtrless 1$, we consider the
boundary value problem
\begin{equation}
  \label{ubpm}
  \begin{cases}
    u'' - [u(1-u)]'  =0 & x\in  \bb R_\pm\;,  \\
    u(0)= u_\mp \;,\\
    \lim_{x\to \pm\infty} u(x) =u_\pm\;,
  \end{cases}
\end{equation}
whose solution $\upbar{u}^\pm$ is given by
\begin{equation}
  \label{ubarpm}
  \upbar{u}^\pm(x) =
    \tfrac 12 + A^\pm \tanh \big[ A^\pm (x-x^\pm) \big] \: ,
\end{equation}
where $A^\pm = \big|u_\pm - \frac 12\big|$ and $x^\pm\in\bb R$ is determined
by imposing $\upbar{u}^\pm (0)=u_\mp$.

Finally, given $0<u_-<u_+<1$ such that $u_-+u_+ = 1$, we consider the
boundary value problem
\begin{equation}
  \label{ubin}
  \begin{cases}
    u'' - [u(1-u)]'  =0 & x\in \bb R\;,  \\
    \lim_{x\to \pm\infty} u(x) =u_\pm\;,
  \end{cases}
\end{equation}
which has a one-parameter family  of solutions given by 
$\{\tau_z \upbar{u},\,z\in\bb R\}$ where $\tau_z$ is the translation
by $z$, i.e.\ $\tau_z u$ is the function defined by 
$(\tau_z u) \, (x) := u(x-z)$, and 
\begin{equation}
  \label{sol1}
  \upbar{u} (x) =
  \tfrac 12 + 
  \big(u_+ -\tfrac 12\big) 
  \tanh \big[\big(u_+ - \tfrac12\big) x \big] \;.
\end{equation}

\bigskip
We now introduce precisely the variational formulation 
\eqref{Gint}-\eqref{V=F} proposed in \cite{bd}.
Let $s:[0,1] \to \bb R$ be the convex function
\begin{equation}
  \label{s=}
  s(u) :=    u\log u +(1-u)\log(1-u) 
\end{equation}
where, as usual, we understand that $0\log 0=0$. 
Let also $g : [0,1]\times \bb R \times [0,1]  \to \bb R$ be the 
continuous function 
\begin{equation}
    \label{g=}
  g(u,\phi,p) := s(u) + s(p) + (1-u) \phi -\log\big(1+e^\phi\big)
  \;.
\end{equation}
Given $u_\pm \in (0,1)$ let 
\begin{equation}
  \label{phipm}
  \phi_\pm:=s'(u_\pm)= \log \frac {u_\pm}{1-u_\pm}   
\end{equation}
and observe that if $u_-+u_+=1$ then $ g(u_-,\phi_-,0) =
g(u_+,\phi_+,0)$. We set 
\begin{equation*}
  \begin{split}
    & \mc C^\pm :=\big\{ \phi\in AC(\bb R_\pm)\,:\: 0\le \phi' \le
    1\,, \: \phi(0)=\phi_\mp,\,
    \lim_{x\to\pm \infty}\phi(x)=\phi_\pm\big\}\;, \\
    & \mc C :=\big\{ \phi\in AC(\bb R)\,:\: 0\le \phi' \le 1\,, \:
    \lim_{x\to-\infty}\phi(x)=\phi_-\,,\:
    \lim_{x\to+\infty}\phi(x)=\phi_+\big\}\;,
\end{split}
\end{equation*}
where $AC$ denotes the space of absolutely continuous functions. We
consider $\mc C^\pm $ and $\mc C$ endowed with the topology of uniform
convergence, so that they are Polish spaces, i.e., metrizable,
complete and separable.
Then, we consider the spaces $L^\infty(\bb R_\pm;[0,1])$ and 
$L^\infty(\bb R;[0,1])$ endowed with the weak* topology, and set 
\begin{equation*}
\mc X^\pm := L^\infty(\bb R_\pm;[0,1]) \times
\mc C^\pm \;, \qquad \mc X := L^\infty(\bb
R;[0,1]) \times \mc C \;,
\end{equation*}
that we consider endowed with the product topology. Observe that also
$\mc X^\pm$ and $\mc X$ are Polish spaces.

If $u_-+u_+\gtrless 1$, equivalently $\phi_-+\phi_+ \gtrless 0$,
for each $\ell>0$ we let  $\mc G^\pm_\ell : \mc X^\pm \to (-\infty,+\infty]$
be the functional defined by 
\begin{equation}
  \label{Glpm}
   \mc G^\pm_\ell (u,\phi) :=
  \begin{cases}
    \displaystyle \int_{\pm (0,\ell)} 
    \big[ g(u,\phi,\phi) - g(u_\pm,\phi_\pm,0) \big] \, dx
    & \textrm{if $(u,\phi)\in B^\pm_\ell$} \: ,
    \\
    +\infty &\textrm{otherwise}\: ,
\end{cases}
\end{equation}
where 
\begin{equation*}
B^\pm_\ell :=\big\{ (u,\phi) \in \mc X^\pm :\: 
\phi(x)=\phi_\pm, \, u(x)=u_\pm \textrm{ for } 
x\in \pm [\ell,\infty) \big\}  \: .
\end{equation*}

If $u_-+u_+=1$, equivalently $\phi_-+\phi_+ =0$, for each
$\ell>0$ we let $\mc G_\ell : \mc X \to (-\infty,+\infty]$ be the
functional defined by
\begin{equation}
  \label{Gl}
  \mc G_\ell (u,\phi) :=
  \begin{cases}
    \displaystyle \int_{-\ell}^\ell 
    \big[ g (u,\phi,\phi') - g(u_+,\phi_+,0) \big] \, dx 
    & \textrm{if $(u,\phi)\in B_\ell$} \: ,
\\
+\infty &\textrm{otherwise}\: ,
\end{cases}
\end{equation}
where 
\begin{equation*}
  \begin{split}
    B_\ell := & \big\{ (u,\phi) \in \mc X :\: \phi(x)=\phi_-,
    \, u(x)=u_-  \textrm{ for } x \le - \ell\,,\:
    \\
    &\phantom{ \big\{ (u,\phi) \in \mc X :\: }
    \phi(x)=\phi_+, \, u(x)=u_+ \textrm{ for } 
    x \ge \ell \big\}\;.
  \end{split}
\end{equation*}

Observe that, up to an additive constant, the definition of 
$\mc G^\pm_\ell$ and $\mc G_\ell$ agrees with \eqref{Gint}.
According with \eqref{Fint}, we also define the functionals 
$\mc F^\pm_\ell:L^\infty(\bb R_\pm;[0,1]) \to (-\infty,+\infty]$ 
and 
$\mc F_\ell:L^\infty(\bb R;[0,1]) \to (-\infty,+\infty]$ 
by 
\begin{equation}
  \label{fpm}
  \mc F^\pm_\ell (u) := \inf_{\phi\in\mc C^\pm} \mc G^\pm_\ell(u,\phi) 
  \,,\qquad
  \mc F_\ell (u) := \inf_{\phi\in\mc C} \mc G_\ell(u,\phi) \;.
\end{equation}
Since $g$ is continuous and $p\mapsto g(u,\phi, p)$ is convex, the
above infima are attained as soon as $u(x)=u_\pm$ for $x \in \pm
[\ell,\infty)$, respectively $u(x)=u_-$ for $x\le -\ell$ and
$u(x)=u_+$ for $x\ge \ell$. As shown in \cite{bd}, there are functions
$u$ for which the set of minimizers is not a singleton.  Recalling
\eqref{ubarab}, we set $\upbar{\phi}^\pm_\ell
:=s'(\upbar{u}^\pm_\ell)$ and $\upbar{\phi}_\ell
:=s'(\upbar{u}_\ell)$.

\begin{proposition}
  \label{equafunc}
  For each $\ell>0$ the functionals $\mc G^\pm_\ell$ and
  $\mc G_\ell$ have a unique minimizer respectively given by 
  $(\upbar{u}^\pm_\ell,\upbar{\phi}^\pm_\ell)$ and 
  $(\upbar{u}_\ell,\upbar{\phi}_\ell)$.
  In particular, the unique minimizer of $\mc F^\pm_\ell$ and
  $\mc F_\ell$ is given by $\upbar{u}^\pm_\ell$ and $\upbar{u}_\ell$,
  respectively. 
\end{proposition}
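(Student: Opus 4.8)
The plan is to prove the three assertions — existence, explicit form, and uniqueness of the minimizer — at one stroke by a calibration argument: I will produce a pointwise lower bound for the integrand of $\mc G_\ell$ whose integral over $(-\ell,\ell)$ is a constant independent of the competitor and whose equality case forces $(u,\phi)=(\upbar{u}_\ell,\upbar{\phi}_\ell)$. I will carry this out for $\mc G_\ell$ (the case $u_-+u_+=1$); the argument for $\mc G^\pm_\ell$ is word for word the same, with $J_{0,\ell}$ in place of $J_\ell:=J_{-\ell,\ell}$ (the constant fixed by \eqref{condcurr} on the relevant interval) and with $\pm(0,\ell)$ in place of $(-\ell,\ell)$. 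Only the identities \eqref{curr} and \eqref{condcurr} for $\upbar{u}_\ell$ are used, not the explicit formula \eqref{ubarab}. Throughout I assume $\ell$ is large enough that $J_\ell>0$, equivalently $\ell>\phi_+$ here (and $\ell>\phi_+-\phi_-$ for $\mc G^\pm_\ell$); for smaller $\ell$ either no admissible pair exists — so the functional is identically $+\infty$ — or the boundary case $J_\ell=0$ occurs, which is elementary.

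First I would write $g(u,\phi,p)=\big[s(u)+(1-u)\phi-\log(1+e^\phi)\big]+s(p)$ and apply two tangent-line estimates coming from the strict convexity of $s$ (recall $s''(r)=1/[r(1-r)]$). Since $s'\big(\hat u(\phi)\big)=\phi$ with $\hat u(\phi):=e^\phi/(1+e^\phi)$, the first bracket is $\ge\psi(\phi):=\log\big(\hat u(\phi)(1-\hat u(\phi))\big)$, with equality iff $u=\hat u(\phi)$; and $s(p)\ge s(P)+s'(P)(p-P)$ for every $P\in(0,1)$, with equality iff $p=P$. The crucial step is to anchor the second estimate along the slope field of the stationary profile, $P=P(\phi):=1-J_\ell/\big(\hat u(\phi)(1-\hat u(\phi))\big)$, which lies in $(0,1)$ for $\phi\in[\phi_-,\phi_+]$ since $0<J_\ell<\min\{u_-(1-u_-),u_+(1-u_+)\}$. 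Using the elementary identity $s(P)-Ps'(P)=\log(1-P)$ one then finds that the leftover collapses to a constant, $\psi(\phi)+s(P(\phi))-P(\phi)s'(P(\phi))=\log J_\ell$; and because every $\phi\in\mc C$ is nondecreasing, so takes values in $[\phi_-,\phi_+]$ on $(-\ell,\ell)$, this yields, for every $(u,\phi)\in B_\ell$,
\begin{equation*}
g\big(u(x),\phi(x),\phi'(x)\big)\ \ge\ \log J_\ell+s'\big(P(\phi(x))\big)\,\phi'(x)\qquad\text{for a.e.\ }x\in(-\ell,\ell),
\end{equation*}
with equality iff $u(x)=\hat u(\phi(x))$ and $\phi'(x)=P(\phi(x))$.

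Next I would integrate. The right-hand side equals $\log J_\ell+\tfrac{d}{dx}\Lambda(\phi)$ with $\Lambda$ a $C^1$ primitive of $s'\circ P$ on $[\phi_-,\phi_+]$, so $\mc G_\ell(u,\phi)\ge c_\ell$ for a constant $c_\ell$ depending only on $\ell$ and $u_\pm$, with equality iff $u=\hat u(\phi)$ a.e.\ and $\phi$ solves the Cauchy problem $\phi'=P(\phi)$, $\phi(-\ell)=\phi_-$, on $(-\ell,\ell)$; this problem has a unique solution since $P$ is smooth. The substitution $\phi\mapsto\hat u(\phi)$ identifies $\int_{\phi_-}^{\phi_+}d\phi/P(\phi)$ with $\int_{u_-}^{u_+}dr/[r(1-r)-J_\ell]=2\ell$ by \eqref{condcurr}, so this solution reaches $\phi_+$ exactly at $x=\ell$; and, recalling $\upbar{\phi}_\ell=s'(\upbar{u}_\ell)$, differentiation together with \eqref{curr} gives $\upbar{\phi}_\ell{}'=\upbar{u}_\ell{}'/[\upbar{u}_\ell(1-\upbar{u}_\ell)]=P(\upbar{\phi}_\ell)$ with $\upbar{\phi}_\ell(-\ell)=\phi_-$, so the equality profile is precisely $\upbar{\phi}_\ell$ (extended by $\phi_\pm$ outside $(-\ell,\ell)$) and correspondingly $u=\hat u(\upbar{\phi}_\ell)=\upbar{u}_\ell$. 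Since $(\upbar{u}_\ell,\upbar{\phi}_\ell)\in B_\ell$ with $\upbar{\phi}_\ell\in\mc C$ and this pair saturates the bound, it is the unique minimizer of $\mc G_\ell$. The statement for $\mc F_\ell$ is then immediate: $\mc F_\ell(u)=\inf_\phi\mc G_\ell(u,\phi)\ge c_\ell=\mc G_\ell(\upbar{u}_\ell,\upbar{\phi}_\ell)\ge\mc F_\ell(\upbar{u}_\ell)$, so $\upbar{u}_\ell$ minimizes $\mc F_\ell$, and $\mc F_\ell(u)=c_\ell$ would force $\mc G_\ell(u,\phi)=c_\ell$ for the optimal $\phi$, hence $u=\upbar{u}_\ell$ by uniqueness for $\mc G_\ell$.

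I expect the genuine difficulty to lie entirely in the calibration: recognizing that the convexity-in-$p$ tangent line should be taken along the specific slope field $P(\phi)$ attached to the stationary solution, and then verifying the algebraic identity by which the remainder $\psi(\phi)+s(P(\phi))-P(\phi)s'(P(\phi))$ reduces to the constant $\log J_\ell$ — this is precisely \eqref{curr} and \eqref{condcurr} in disguise, and it is what substitutes for the joint convexity which $g$ lacks. The remaining points (absolute continuity and admissibility of the extended $\upbar{\phi}_\ell$, uniqueness for the first-order ODE, and the degenerate small-$\ell$ cases) are routine.
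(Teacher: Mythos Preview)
Your proof is correct and takes a genuinely different route from the paper's. The paper argues in two steps: first it minimizes in $u$ for fixed $\phi$ (using the strict convexity of $u\mapsto g(u,\phi,p)$, getting $u=(s')^{-1}(\phi)$), then applies the direct method to the reduced functional $\phi\mapsto\mc G_\ell\big((s')^{-1}(\phi),\phi\big)$, and finally identifies the minimizer by showing that the Euler--Lagrange equation of the reduced functional is equivalent to the stationary Burgers problem \eqref{ub}, whose uniqueness was established via \eqref{curr}--\eqref{condcurr}. Your argument is instead a calibration (Weierstrass field) argument: you bound the integrand from below by a null Lagrangian $\log J_\ell + \tfrac{d}{dx}\Lambda(\phi)$, so that existence, the explicit minimum value, and uniqueness all fall out at once from the strict convexity equality cases and Cauchy--Lipschitz for the first-order ODE $\phi'=P(\phi)$. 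What your approach buys is that it bypasses both the abstract compactness/lower semicontinuity step and the Euler--Lagrange computation, and it makes transparent \emph{why} the first integral \eqref{curr} is the key structural ingredient: the specific choice $P(\phi)=1-J_\ell/[\hat u(\phi)(1-\hat u(\phi))]$ is exactly what collapses $\psi(\phi)+\log(1-P(\phi))$ to the constant $\log J_\ell$. The paper's approach, on the other hand, is more robust in that it does not require one to \emph{find} the calibrating field, and it handles the constraint $0\le\phi'\le1$ without the case distinction on the sign of $J_\ell$ that you flag. Both proofs ultimately rest on the same two facts, \eqref{curr} and \eqref{condcurr}, but packaged quite differently.
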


\begin{proof}
  We prove the statement only for $\mc G_\ell$.
  In view of the strict convexity of $[0,1]\ni u\mapsto g(u,\phi,p)$, we
  can easily minimize $g(\cdot,\phi,p)$ and 
  the corresponding optimal $u$ satisfies $s'(u) = \phi$.
  Whence
  \begin{equation*}  
    \min_{u,\phi} \mc G_\ell(u,\phi) = 
    \min_\phi \mc G_\ell \big( (s')^{-1} (\phi),\phi \big)\;. 
  \end{equation*}

  The functional on the right hand side is clearly coercive and
  lower semicontinuous on $\mc C$.
  By the direct method of the calculus of variations, it
  thus admits a minimizer $\phi^*$.
  A straightforward computation shows that the Euler-Lagrange
  equation for $\mc G_\ell \big( (s')^{-1} (\phi),\phi \big)$ implies
  that $(s')^{-1} (\phi^*)$ solves \eqref{ub} in the interval 
  $(-\ell,\ell)$. By the uniqueness to such problem we deduce
  $\phi^*= s'(\upbar{u}_\ell) =\upbar{\phi}_\ell$. 

  Finally, the last statement follows from the coercivity
  of $\mc G_\ell$.
\end{proof}

\section{Variational convergence}
\label{sec2}

In this section we discuss the variational formulation 
on unbounded intervals. 
We show that the functionals in \eqref{Glpm} and \eqref{Gl} are well
defined also for $\ell = \infty$ and coincide with the $\Gamma$-limit
of the sequences $\{\mc G^\pm_\ell\}$ and $\{\mc G_\ell\}$ as
$\ell\to\infty$. 
In particular, this yields the stability of the boundary value
problems \eqref{ub}.

We start by the following proposition which yields the basic estimates
needed in sequel.  
Given $a_\pm\in \bb R$ we let $\vartheta_{a_-,a_+}(x) := a_- \id_{\bb
R_-}(x) + a_+ \id_{\bb R_+}(x)$.

\begin{proposition}
  \label{stimabasso}
  Let $u_-+u_+\gtrless 1$. There exists a constant $C> 0$ such that
  for any $(u,\phi)\in \mc X^\pm$
  \begin{equation*}
    \big\| g (u,\phi,\phi') - g(u_\pm,\phi_\pm,0) 
    \big\|_{L^1({\bb R_\pm} )}  
    \le C \, \big( \| u - u_\pm\|_{L^2({\bb R_\pm} )} 
    + \| \phi - \phi_\pm\|_{L^1({\bb R_\pm} )} + 1 \big)\;.
  \end{equation*}
  Let otherwise $u_- + u_+ = 1$. There exists a constant $C> 0$ such
  that for any $(u,\phi)\in \mc X$
  \begin{equation*}
    \big\| g (u,\phi,\phi') - g(u_+,\phi_+,0)
    \big\|_{L^1({\bb R} )} 
    \le C\, \big( \| u - \vartheta_{u_-,u_+}\|_{L^2({\bb R} )} + \| \phi -
    \vartheta_{\phi_-,\phi_+}\|_{L^1({\bb R} )} + 1 \big)\;.
  \end{equation*}
\end{proposition}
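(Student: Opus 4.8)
The plan is to decompose the integrand by a Fenchel algebraic identity into three pieces --- two elementary, one genuinely singular --- and to concentrate the work on the singular one, the entropy $s(\phi')$ of the derivative, which is unbounded at $\phi'=0$ and need not decay along an arbitrary competitor. Write $s^*(\phi):=\log(1+e^\phi)$ for the Legendre transform of $s$ and set $D(u,\phi):=s(u)+s^*(\phi)-u\phi$ and $\Psi(\phi):=\phi-2s^*(\phi)$. A direct check gives $g(u,\phi,p)=D(u,\phi)+s(p)+\Psi(\phi)$; moreover $D\ge 0$ by Fenchel--Young, with equality exactly on $\{\phi=s'(u)\}$, so $D(u_\pm,\phi_\pm)=0$ and, since $s(0)=0$, $g(u_\pm,\phi_\pm,0)=\Psi(\phi_\pm)$. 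Hence
\begin{equation*}
  g(u,\phi,\phi')-g(u_\pm,\phi_\pm,0)\;=\;D(u,\phi)\;+\;s(\phi')\;+\;\big[\Psi(\phi)-\Psi(\phi_\pm)\big],
\end{equation*}
and by the triangle inequality it suffices to bound the $L^1$ norm of each of the three summands. I would do this on a half-line --- say $\bb R_+$, with reference $(u_+,\phi_+)$ and $\phi$ nondecreasing, $0\le\phi'\le 1$, $\int_{\bb R_+}\phi'\le\phi_+-\phi_-=:m$, the case $u_-+u_+<1$ on $\bb R_-$ being the mirror image --- and then deduce the statement on $\bb R$ by splitting $\bb R=\bb R_-\cup\bb R_+$ and using $g(u_-,\phi_-,0)=g(u_+,\phi_+,0)$, which holds when $u_-+u_+=1$.

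The term $\Psi(\phi)-\Psi(\phi_\pm)$ is immediate: $\Psi'(\phi)=1-\tfrac{2e^\phi}{1+e^\phi}=-\tanh(\phi/2)$, so $\Psi$ is $1$-Lipschitz and $\|\Psi(\phi)-\Psi(\phi_\pm)\|_{L^1}\le\|\phi-\phi_\pm\|_{L^1}$. For $D$ I would split $D(u,\phi)=D(u,\phi_\pm)+\big(D(u,\phi)-D(u,\phi_\pm)\big)$. The first summand $D(u,\phi_\pm)=s(u)-s(u_\pm)-s'(u_\pm)(u-u_\pm)$ is the nonnegative Bregman divergence of the smooth strictly convex $s$: it is continuous on $[0,1]$ and vanishes quadratically at $u_\pm$, so the ratio $D(u,\phi_\pm)/(u-u_\pm)^2$ extends continuously to $[0,1]$, whence $D(u,\phi_\pm)\le C(u-u_\pm)^2$ there. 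The second summand $\big(s^*(\phi)-s^*(\phi_\pm)\big)-u(\phi-\phi_\pm)$ is bounded by $2|\phi-\phi_\pm|$ since $0\le(s^*)'\le 1$ and $0\le u\le 1$. Integrating, $\|D(u,\phi)\|_{L^1}\le C\,\|u-u_\pm\|^2_{L^2}+2\,\|\phi-\phi_\pm\|_{L^1}$, producing the contributions from $u$ and $\phi$ on the right-hand side.

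The crux is the term $s(\phi')$. Since $\phi$ is nondecreasing with $\phi(+\infty)=\phi_+$, Fubini gives $\|\phi-\phi_+\|_{L^1(\bb R_+)}=\int_0^\infty(\phi_+-\phi(x))\,dx=\int_0^\infty x\,\phi'(x)\,dx$, the first moment of the finite measure $\phi'\,dx$. Now $|s(\phi')|=-s(\phi')=\phi'\log\tfrac1{\phi'}+(1-\phi')\log\tfrac1{1-\phi'}$, and the second summand satisfies $(1-\phi')\log\tfrac1{1-\phi'}\le\phi'$ pointwise (equivalently $q-q\log q\le 1$ on $[0,1]$), so it integrates to $\le m$. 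For the first summand I would split $\bb R_+$ by comparing $\phi'(x)$ with $x^{-2}$: on $\{x\ge 2,\ \phi'(x)<x^{-2}\}$, monotonicity of $p\mapsto p\log\tfrac1p$ near $0$ gives $\phi'\log\tfrac1{\phi'}\le 2x^{-2}\log x$, which is integrable; on $\{\phi'(x)\ge x^{-2}\}$ one has $\log\tfrac1{\phi'}\le 2\log x\le 2x$, hence $\phi'\log\tfrac1{\phi'}\le 2x\,\phi'$, integrating to $\le 2\int_0^\infty x\,\phi'(x)\,dx=2\,\|\phi-\phi_+\|_{L^1}$; and the bounded piece $\{x\le 2\}$ contributes at most a constant. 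Thus $\|s(\phi')\|_{L^1(\bb R_+)}\le 2\,\|\phi-\phi_+\|_{L^1}+C$. Summing the three estimates gives the half-line bound, and the reflection and the splitting described above give the $\bb R_-$ and the whole-line assertions.

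I expect this last step to be the only real obstacle. Unlike $s(u)$ and $\log(1+e^\phi)$, the term $s(\phi')$ cannot be controlled pointwise --- it is unbounded near its singularity at $\phi'=0$ and does not decay --- so it has to be tied to a global quantity; recognizing that the correct one is exactly the first moment $\int_0^\infty x\,\phi'\,dx=\|\phi-\phi_\pm\|_{L^1}$, and extracting it through the layer-cake comparison against $x^{-2}$, is what makes the estimate close.
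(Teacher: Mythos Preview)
Your argument is correct. The algebraic decomposition via the Fenchel pair $(s,s^*)$ is the same as the paper's explicit expansion, only packaged more cleanly; the bounds on the $D$ and $\Psi$ pieces are identical in substance to the paper's treatment of the corresponding terms. The genuine difference is in the control of $\int|s(\phi')|$. The paper isolates this as a separate lemma (Lemma~\ref{t:el}): it splits according to whether $\phi'\le\delta$ or $\phi'>\delta$, applies H\"older with exponents $3/2,3$ and the weight $(1+x)$ on the small-derivative set, and on the complement bounds its measure via $|A_\delta^\complement|^2\le 2\delta^{-1}\|\phi-\phi_+\|_{L^1}$. This yields the sharper inequality $\int|s(\phi')|\le \gamma\,\|\phi-\phi_+\|_{L^1}+C_\gamma(\phi_-,\phi_+)$ for every $\gamma>0$, with $C_\gamma\to 0$ as $\phi_+-\phi_-\to 0$. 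Your layer-cake comparison against $x^{-2}$ is more elementary and entirely sufficient for the proposition, but it only gives a fixed constant in front of $\|\phi-\phi_+\|_{L^1}$; the paper's stronger form is actually reused later in the proof of Theorem~\ref{t:1} (to show that the tail energy $\mc G^+_{(L,+\infty)}$ is asymptotically nonnegative and that the infimum in \eqref{infimum} is finite), so the extra work there is not wasted. Finally, note that both your argument and the paper's produce the quadratic quantity $\|u-u_\pm\|_{L^2}^2$ rather than the linear $\|u-u_\pm\|_{L^2}$ appearing in the statement; this slight imprecision is shared and is harmless for the applications.
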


We premise an elementary lemma.

\begin{lemma}
  \label{t:el}
  Let $\phi_+, \, \f_- \in \bb R$ be such that $\phi_-<\phi_+$. For each
  $\gamma>0$ there exists a constant
  $C_\gamma(\phi_-,\phi_+)\in (0,+\infty)$, 
  satisfying 
  $C_\gamma(\phi_-,\phi_+)\to 0$ as $\phi_+-\phi_-\to 0$,
  such that   for any $\phi\in \mc C^+$
  \begin{equation*}
    \int_0^{+\infty} \big| s(\phi')\big| \, dx \le 
    \gamma \, \| \phi -\phi_+\|_{L^1(\bb R_+)} + C_\gamma (\phi_-,\phi_+)\;.
  \end{equation*}
\end{lemma}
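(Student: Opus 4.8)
The plan is to dominate the integrand by a multiple of the entropy density $-\phi'\log\phi'$ and then exploit the fact that the exponential law maximises entropy among probability densities on $\bb R_+$ with prescribed mean. If $\|\phi-\phi_+\|_{L^1(\bb R_+)}=+\infty$ there is nothing to prove, so assume it is finite and set $d:=\phi_+-\phi_->0$. Since $s\le 0$ on $[0,1]$ we have $|s(p)|=-p\log p-(1-p)\log(1-p)$, and the elementary bound $-(1-p)\log(1-p)\le p$ for $p\in[0,1]$ (valid since $r-r\log r\le 1$ for $r\in(0,1]$) gives $|s(p)|\le -p\log p+p$. As $\int_0^{+\infty}\phi'\,dx=\phi_+-\phi_-=d$, integrating this inequality yields
\begin{equation*}
  \int_0^{+\infty}\big|s(\phi')\big|\,dx\;\le\;\int_0^{+\infty}\big(-\phi'\log\phi'\big)\,dx+d\;.
\end{equation*}

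The next step is to view $\rho:=\phi'/d$ as a probability density on $\bb R_+$. By Tonelli's theorem and the identity $\phi_+-\phi(x)=\int_x^{+\infty}\phi'\,dy$, its mean is
\begin{equation*}
  m\;:=\;\int_0^{+\infty}x\,\rho(x)\,dx\;=\;\frac1d\int_0^{+\infty}\big(\phi_+-\phi(x)\big)\,dx\;=\;\frac1d\,\|\phi-\phi_+\|_{L^1(\bb R_+)}\;,
\end{equation*}
which lies in $(0,+\infty)$, the positivity being clear since $\phi$ is continuous with $\phi(0)=\phi_-\neq\phi_+$. Comparing $\rho$ with the exponential density $q(x):=m^{-1}e^{-x/m}$ through $\int_0^{+\infty}\rho\log(\rho/q)\,dx\ge 0$ gives the Gibbs bound $-\int_0^{+\infty}\rho\log\rho\,dx\le 1+\log m$; since $-\int_0^{+\infty}\rho\log\rho\,dx=\tfrac1d\int_0^{+\infty}(-\phi'\log\phi')\,dx+\log d$, this rewrites as
\begin{equation*}
  \int_0^{+\infty}\big(-\phi'\log\phi'\big)\,dx\;\le\;d-2d\log d+d\log\|\phi-\phi_+\|_{L^1(\bb R_+)}\;.
\end{equation*}

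Finally, given $\gamma>0$, I would absorb the remaining logarithm by Young's inequality in the form $d\log y\le\gamma y-d\log(\gamma/d)-d$ for $y>0$ (a rewriting of $\log z\le z-1$), applied with $y=\|\phi-\phi_+\|_{L^1(\bb R_+)}$, and add the three displays above to get
\begin{equation*}
  \int_0^{+\infty}\big|s(\phi')\big|\,dx\;\le\;\gamma\,\|\phi-\phi_+\|_{L^1(\bb R_+)}+(\phi_+-\phi_-)\big(1-\log[\gamma(\phi_+-\phi_-)]\big)\;.
\end{equation*}
Hence one may take $C_\gamma(\phi_-,\phi_+):=(\phi_+-\phi_-)\big(1-\log[\gamma(\phi_+-\phi_-)]\big)$ — replaced by $(\phi_+-\phi_-)\,\big|1-\log[\gamma(\phi_+-\phi_-)]\big|+(\phi_+-\phi_-)$ if one wants it manifestly positive — and $C_\gamma(\phi_-,\phi_+)\to 0$ as $\phi_+-\phi_-\to 0$ because $r\log(1/r)\to 0$. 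I do not expect any real obstacle here beyond careful book-keeping of the constants: the mechanism is that the mean $m$, which is of order $\|\phi-\phi_+\|_{L^1}/(\phi_+-\phi_-)$, enters only through its logarithm, so its contribution is absorbed into $\gamma\|\phi-\phi_+\|_{L^1}$ at the price of an error of order $(\phi_+-\phi_-)\log\big(1/(\phi_+-\phi_-)\big)$, which is exactly what makes the constant vanish as $\phi_+-\phi_-\to 0$.
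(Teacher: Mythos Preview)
Your argument is correct, and it takes a genuinely different route from the paper's proof. The paper splits $\bb R_+$ according to the size of $\phi'$: on $A_\delta=\{\phi'\le\delta\}$ it controls $\int\phi'|\log\phi'|$ by a H\"older trick with the weight $(1+x)$, obtaining a bound of the form $\eta_\delta\big(\|\phi-\phi_+\|_{L^1}+d\big)^{2/3}$ with $\eta_\delta\to 0$; on $A_\delta^\complement$ it uses the elementary estimate $|A_\delta^\complement|^2\le 2\delta^{-1}\|\phi-\phi_+\|_{L^1}$ together with a convexity bound on $s$. The final inequality then follows by choosing $\delta$ small. Your proof instead normalises $\phi'/d$ to a probability density on $\bb R_+$, identifies its mean with $\|\phi-\phi_+\|_{L^1}/d$ via integration by parts, and invokes the Gibbs inequality against the exponential law to get $\int(-\phi'\log\phi')\le d+d\log\|\phi-\phi_+\|_{L^1}-2d\log d$; a one-line Young inequality then absorbs the logarithm into $\gamma\|\phi-\phi_+\|_{L^1}$. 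Your route is shorter and yields the explicit constant $C_\gamma=(\phi_+-\phi_-)\big(1-\log[\gamma(\phi_+-\phi_-)]\big)$, which makes the behaviour as $\phi_+-\phi_-\to 0$ transparent; the paper's approach, on the other hand, avoids any appeal to relative entropy and stays entirely within elementary real-variable estimates. One small remark: your splitting $D(\rho\|q)=\int\rho\log\rho+\log m+1$ is legitimate here because $\rho\le 1/d$ forces $\int(\rho\log\rho)_+<\infty$, so no $\infty-\infty$ issue arises --- you may want to say this explicitly.
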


\begin{proof}
  Given $\delta \in (0,1)$, let $A_\delta :=\big\{x\in \bb R_+\,:\:
  \phi'(x)\in [0,\delta]\big\}$ and set 
  $A_\delta^\complement:= \bb R_+\setminus A_\delta$. We write 
  \begin{equation}
    \label{decom}
     \int_0^{+\infty} \big| s(\phi')\big| \, dx
     =  \int_{A_\delta} \big| s(\phi')\big| \, dx
     +  \int_{A_\delta^\complement} \big| s(\phi')\big|\, dx
  \end{equation}
  and estimate separately the two terms on the right hand side. 
  To bound the first one, we first observe that for $p\in [0,1]$ we
  have  $|s(p)| \le p ( | \log p | + 1)$ and then 
  use H\"older inequality as follows 
  \begin{equation*}
    \begin{split}
      &\int_{A_\delta} \phi' | \log \phi'| \, dx  
       \;=\; \int_{A_\delta} \big[ \phi'(x) \, (1+ x) \big]^{2/3}
      \phi'(x)^{1/3} {(1 + x)}^{- 2/3} |\log \phi' (x) | \, dx \\
      & \qquad \le \Big[ \int_{A_\delta} \phi'(x) \, (1+x) \, dx
      \Big]^{2/3} \Big[ \int_{A_\delta} \phi'(x) \, (1+x)^{-2} \big|\log
      \phi' (x)\big|^3 \, dx \Big]^{1/3}
      \\
      & \qquad \le \eta_\delta \Big[ \int_{A_\delta} \phi'(x)\, (1+x)
      \, dx \Big]^{2/3} \le \eta_\delta \Big[ \int_0^\infty \phi'(x)
      \,x \, dx \: + \: \phi_+-\phi_-\Big]^{2/3}
      \\
      &\qquad
       = \eta_\delta \Big[ 
       \|\phi-\phi_+\|_{L^1(\bb R_+)} \: + \: \phi_+-\phi_-\Big]^{2/3} \:,
    \end{split}
  \end{equation*}
  where $ \eta _\delta := \max_{p\in[0,\delta]} p |\log p|^3  
  \, \int_0^\infty (1+x)^{-2}\,dx$ and in the last step we used that 
  $\|\phi-\phi_+\|_{L^1(\bb R_+)} = \int_0^\infty \phi'(x) \,x \, dx$. 
  Since $\int_{A_\delta} \phi' \, dx \le 
    \int_{0}^{+\infty} \phi' \, dx =\phi_+-\phi_-$, we get 
  \begin{equation}
    \label{primopezzo}      
    \int_{A_\delta} \big|s(\phi')\big| \, dx
    \;\le\; \eta_\delta \Big[ 
       \|\phi-\phi_+\|_{L^1(\bb R_+)} \: + \: \phi_+-\phi_-\Big]^{2/3} 
       \;+\; \phi_+-\phi_-
  \end{equation}
 
  To bound the second term on the right hand side of \eqref{decom}, we
  observe that, in view of the convexity of $s$, for each $p\in
  [\delta,1]$ we have 
  \begin{equation*}
    |s(p)|\le |s(\delta)| + |s'(\delta)|(p-\delta)
    \le |s(\delta)| + |s'(\delta)| p \;.
  \end{equation*}
  Denoting by $\big|A_\delta^\complement\big|$ the Lebesgue measure of
  $A_\delta^\complement$ we then deduce
  \begin{equation}
    \label{secondopezzo}
    \int_{A_\delta^\complement} \big| s(\phi'(x))\big|\, dx 
    \le  \int_{A_\delta^\complement} 
    \big[ |s(\delta)| +|s'(\delta)|\, \phi'(x) \big] \, dx
    \le |s(\delta)|  \big|A_\delta^\complement \big|
    +|s'(\delta)| (\phi_+-\phi_-)\:.
  \end{equation}
  Moreover, we have $\big|A_\delta^\complement \big|^2 \le 2\delta^{-1}
  \|\phi-\phi_+\|_{L^1(\bb R_+)}$. Indeed, 
  \begin{equation*}
    \begin{split}
      \big\|\phi-\phi_+\big\|_{L^1(\bb R_+)} 
      &\;=\; \int_0^{+\infty} \phi'(x)\,x\,dx 
      \;\ge\; \int_{A_\delta^\complement} \phi'(x)\,x\,dx 
      \\
      & \;\ge\; \delta  \int_{A_\delta^\complement} x\,dx 
      \;\ge\; \delta  \int_0^{| A_\delta^\complement|} x\,dx 
      \;=\; \tfrac {\delta}2 \, \big| A_\delta^\complement\big|^2 \;.
    \end{split}
  \end{equation*}
  Therefore, recalling \eqref{decom} and noticing that
  $\lim_{\delta\downarrow 0} \eta_\delta =0$,
  $\lim_{\delta\downarrow 0} \delta^{-1/2}s(\delta) =0$, the lemma
  follows easily from \eqref{primopezzo} and \eqref{secondopezzo}.
\end{proof}

\begin{proof}[Proof of Proposition \ref{stimabasso}]
  We prove the statement only for $u_- + u_+ >1$. Indeed,
  the statement for $u_- + u_+ <1$ is completely analogous,
  and the case $u_- + u_+ =1$ follows, noticing that
  $g(u_+,\phi_+,0)=g(u_-,\phi_-,0)$, from the previous ones.

  Recalling \eqref{g=} and $\phi_+ = s'(u_+)$, we write
  \begin{equation*}
    \begin{split}
      & g(u,\phi,\phi') - g(u_+,\phi_+,0) \;=\;  s(u) - s(u_+) - s'(u_+)(u
      - u_+) 
      \\ 
      &\qquad\quad
      + (1-u)(\phi-\phi_+) 
      -\big[ \log \big(1 + e^{\phi}
      \big)-\log \big(1 + e^{\phi_+}\big)\big] +s(\phi')\;.
    \end{split}
  \end{equation*}
  As $\phi\in \mc C^+$ implies $\phi\le \phi_+ = s'(u_+)$,
  \begin{equation}\label{4add}
    \begin{split}
      & \big| g(u,\phi,\phi') - g(u_+,\phi_+,0) \big | 
       \; \le \;  s(u) - s(u_+) - s'(u_+)(u - u_+)  \\ 
      &  \qquad\quad + (1-u)(\phi_+ - \phi) 
      + \log \big(1 + e^{\phi_+}\big) - \log\big(1 + e^{\phi}\big)
      + \big| s(\phi') \big|   \;.
    \end{split}
  \end{equation}
  Since $s$ is convex and $C^2$ in $(0,1)$, there exists a constant
  $C>0$ depending only on $u_+$ such that
  \begin{equation*}
    s(u) - s(u_+) - s'(u_+) (u - u_+) \le C  (u - u_+) ^2 \;.
  \end{equation*} 
  On the other hand, we clearly have $(1-u) (\phi_+ - \phi)\le |\phi -
  \phi_+|$.  Moreover, since the real function $\phi\mapsto \log(1 +
  e^{\phi})$ has Lipschitz constant one, we have
  \begin{equation*}
     \log\big(1 + e^{\phi_+}\big) - \log(1 + e^{\phi}) \le  |\phi - \phi_+| \;.
   \end{equation*} 
   In view of \eqref{4add}, the proof is now completed by applying
   Lemma~\ref{t:el}.
 \end{proof}

Let
\begin{equation*}
  \begin{split}
    &\mc D^\pm:= \{(u,\phi)\in \mc X^\pm \, \text{ such that } \| u
    - u_\pm\|_{L^2({\bb R_\pm} )} + \| \phi -
    \phi_\pm\|_{L^1({\bb R_\pm} )} < +\infty\}\;,\\ 
    &\mc D:= \{(u,\phi)\in \mc X \, \text{ such that } \| u -
    \vartheta_{u_-,u_+}\|_{L^2({\bb R} )} + \| \phi -
    \vartheta_{\phi_-,\phi_+}\|_{L^1({\bb R} )} < +\infty\}\;.
\end{split}
\end{equation*}
In view of Proposition~\ref{stimabasso}, we can introduce the functional $\mc
G^\pm : \mc X^\pm : (-\infty,+\infty]$ as follows
\begin{equation}
  \label{Gpm}
  \mc G^\pm (u,\phi) :=
  \begin{cases} 
    \displaystyle \int_{\bb R_\pm} \big[ g (u,\phi,\phi') -
    g(u_\pm,\phi_\pm,0) \big] \, dx
    & \textrm{ if } (u,\phi) \in \mc D^\pm \: ,  \\
    +\infty & \textrm{ otherwise}\;.
  \end{cases}
\end{equation}
Analogously, let $\mc G: \mc X : (-\infty,+\infty]$ be defined by 
\begin{equation}
  \label{G}
  \mc G (u,\phi) :=
  \begin{cases} 
    \displaystyle \int_{\bb R} 
    \big[ g (u,\phi,\phi') - g(u_+,\phi_+,0) \big] \, dx 
    & \textrm{ if } (u,\phi) \in \mc D  \: , \\
    +\infty & \textrm{ otherwise}\;.
\end{cases}
\end{equation}

Our first main result concerns the variational convergences of the
sequences $\{\mc G_\ell^\pm\}$ and $\{\mc G_\ell\}$, as respectively
defined in \eqref{Glpm} and \eqref{Gl}, as $\ell$ diverges.  The
appropriate notion is the so-called \emph{$\Gamma$-convergence}, see
\cite{Braides,Da}, that we next recall.  Let $X$ be a metric space and
$F_n : X\to (-\infty,+\infty]$, $n\in \bb N$.  The sequence of
functional $\{F_n\}$ is said to $\Gamma$-converge to $F: X\to
(-\infty,+\infty]$ iff the two following inequalities hold for any
$x\in X$:
\begin{itemize}
\item[\emph{(i)}] \emph{$\Gliminf$ inequality.}  For any sequence
  $\{x_n\} \subset X$ converging to $x$ we have $\liminf_n F_n(x_n)
  \ge F(x)$.
\item[\emph{(ii)}]\emph{$\Glimsup$ inequality.}  There exists a
  sequence $\{x_n\} \subset X$ converging to $x$ such that $\limsup_n
  F_n(x_n) \le F(x)$.
\end{itemize}
We also recall that the sequence $\{F_n\}$ is \emph{equi-coercive} iff 
any sequence $\{x_n\}\subset X$ such that $\limsup_n F_n(x_n)<+ \infty$
is precompact. 
As well known \cite{Braides,Da}, the $\Gamma$-convergence of a sequence of
equi-coercive functionals $F_n$ implies the convergence, up to a
subsequence, of their minimizers to a minimizer of the $\Gamma$-limit.

Recall that the stationary solutions to the Burgers equation
$\upbar{u}^\pm$ and $\upbar{u}$ are given in \eqref{ubarpm} and
\eqref{sol1}.  We set $\upbar{\phi}^\pm :=s'(\upbar{u}^\pm)$ and
$\upbar{\phi} :=s'(\upbar{u})$.

\begin{theorem}
  \label{t:1}
  Let $0<u_-<u_+<1$ be such that $u_-+u_+\gtrless 1$.
  \begin{itemize}  
  \item[(i)] There exists a constant $C\in (1,+\infty)$ such that for
    any $(u,\phi)\in \mc D^\pm$
    \begin{eqnarray}\label{stime1}
       \mc G^\pm(u,\phi) &\le& C\, \big(\|u - u_{\pm}\|_{L^2(\bb R_\pm)} + \|\phi -
        \phi_{\pm}\|_{L^1(\bb R_\pm)} \big)+ C,
      \\
      \label{stime2}
       \mc G^\pm(u,\phi) &\ge& \frac{1}{C}\, 
      \big(\|u - u_{\pm}\|_{L^2(\bb R_\pm)} + \|\phi - \phi_{\pm}\|_{L^1(\bb R_\pm)}
      \big)- C.
    \end{eqnarray}
    Moreover, the functional $ \mc G^\pm$ is lower semicontinuous and
    coercive on $\mc X^\pm$.  Finally, the unique minimizer of $\mc
    G^\pm$ is $(\upbar{u}^\pm,\upbar{\phi}^\pm)$.
  \item[(ii)] 
    The sequence of functionals $\{\mc G_\ell^\pm\}_{\ell>0}$ is
    equi-coercive and $\Gamma$-converges 
    to $\mc G^\pm$ as $\ell\to \infty$. In particular,
    $(\upbar{u}^\pm_\ell,\upbar{\phi}^\pm_\ell)\to
    (\upbar{u}^\pm,\upbar{\phi}^\pm)$ as $\ell\to\infty$.  
  \end{itemize}
\end{theorem}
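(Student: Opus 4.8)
The plan is to prove part (i) first, using Proposition \ref{stimabasso} together with a matching lower bound, and then deduce part (ii) by a standard $\Gamma$-convergence argument in which the recovery sequences are obtained by truncation. I treat only the case $u_-+u_+>1$ on $\bb R_+$; the other sign and the endpoint $\bb R_-$ piece are symmetric.

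For the upper bound \eqref{stime1} I would simply integrate the pointwise estimate already proven in Proposition \ref{stimabasso}: on $\mc D^+$ the integrand $g(u,\phi,\phi')-g(u_+,\phi_+,0)$ is in $L^1(\bb R_+)$ with norm controlled by $C(\|u-u_+\|_{L^2(\bb R_+)}+\|\phi-\phi_+\|_{L^1(\bb R_+)}+1)$, hence $\mc G^+(u,\phi)$ is bounded above by the same quantity. For the lower bound \eqref{stime2} I would redo the decomposition in \eqref{4add} keeping track of signs: writing $g(u,\phi,\phi')-g(u_+,\phi_+,0)$ as the sum of the Bregman term $s(u)-s(u_+)-s'(u_+)(u-u_+)\ge c\,(u-u_+)^2$ (strict convexity of $s$ away from $0,1$, but one must handle the boundary-of-$[0,1]$ behaviour of $s''$ — since $s$ is bounded on $[0,1]$ the deficit is still $\gtrsim (u-u_+)^2$ locally and $\gtrsim |u-u_+|$ globally, so $\gtrsim \|u-u_+\|_{L^2}^2$ after integration, then Cauchy--Schwarz on the set where $u$ is far from $u_+$ converts this to a multiple of $\|u-u_+\|_{L^2}$ minus a constant), plus the term $(1-u)(\phi-\phi_+)+[\log(1+e^{\phi_+})-\log(1+e^\phi)]$ which, since $\phi\le\phi_+$ on $\mc C^+$, is nonnegative and comparable to $|\phi-\phi_+|$ (because $1-u\ge 1-u_+>0$ near the relevant region and $t\mapsto\log(1+e^t)$ has derivative bounded away from $0$ and $1$ there), plus $s(\phi')$ whose negative part is absorbed by Lemma \ref{t:el}. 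Collecting these gives \eqref{stime2}. Lower semicontinuity of $\mc G^+$ follows from Fatou together with lower semicontinuity of $\phi\mapsto\int s(\phi')$ under uniform convergence (convexity of $s$ and weak compactness of $\phi'$ in $L^1$, using $0\le\phi'\le 1$), and coercivity is immediate from \eqref{stime2}. That the unique minimizer of $\mc G^+$ is $(\upbar u^+,\upbar\phi^+)$ is proven exactly as in Proposition \ref{equafunc}: minimize pointwise in $u$ to get $s'(u)=\phi$, then the Euler--Lagrange equation for the reduced functional is \eqref{ub} on $\bb R_+$ with the prescribed limits, whose unique solution is $\upbar u^+$; uniqueness as a minimizer of $\mc G^+$ then comes from the strict convexity in $u$ and the strict positivity of the lower bound away from $(\upbar u^+,\upbar\phi^+)$.

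For part (ii), equi-coercivity of $\{\mc G_\ell^+\}$ follows from \eqref{stime2} applied with the same constant $C$ for all $\ell$ (the bound is uniform in $\ell$ because $\mc G_\ell^+(u,\phi)\ge \mc G^+(u,\phi)$ whenever $(u,\phi)\in B_\ell^+$, as the extra integrand vanishes outside $(0,\ell)$), combined with the fact that bounded sets in $L^2\times\mc C^+$ of this type are precompact in the product topology. The $\Gliminf$ inequality: given $(u_\ell,\phi_\ell)\to(u,\phi)$ with $\liminf\mc G_\ell^+(u_\ell,\phi_\ell)<\infty$, fix $L>0$; for $\ell\ge L$ the integral over $(0,L)$ of the (nonnegative-part-corrected) integrand for $(u_\ell,\phi_\ell)$ is $\le\mc G_\ell^+(u_\ell,\phi_\ell)+\text{const}$, and passing to the limit using lower semicontinuity on the bounded interval $(0,L)$, then letting $L\to\infty$ by monotone convergence, yields $\mc G^+(u,\phi)\le\liminf\mc G_\ell^+(u_\ell,\phi_\ell)$. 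The $\Glimsup$ inequality: for $(u,\phi)\in\mc D^+$ define $(u_\ell,\phi_\ell)$ by leaving $(u,\phi)$ unchanged on $(0,\ell-r_\ell)$ for a slowly growing cutoff $r_\ell$, interpolating $\phi$ linearly (respecting $0\le\phi_\ell'\le 1$, which is where one needs $r_\ell$ large enough relative to the remaining oscillation of $\phi$) from $\phi(\ell-r_\ell)$ to $\phi_+$ and setting $u_\ell=u_+$, on $[\ell-r_\ell,\ell)$, and $(u_+,\phi_+)$ beyond $\ell$; then $(u_\ell,\phi_\ell)\in B_\ell^+$, $(u_\ell,\phi_\ell)\to(u,\phi)$, and $\mc G_\ell^+(u_\ell,\phi_\ell)-\mc G^+(u,\phi)\to 0$ because the error is the integral of an $L^1$ integrand over the shrinking-mass tail $[\ell-r_\ell,\infty)$ plus a transition-layer contribution that is $o(1)$ by Lemma \ref{t:el} applied on $[\ell-r_\ell,\ell]$. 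Finally, the convergence of minimizers $(\upbar u_\ell^+,\upbar\phi_\ell^+)\to(\upbar u^+,\upbar\phi^+)$ is the standard consequence of $\Gamma$-convergence plus equi-coercivity plus uniqueness of the minimizer of the limit.

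The main obstacle is the $\Glimsup$ construction: one must produce a recovery sequence that (a) lies in $B_\ell^+$, i.e.\ is exactly $(u_+,\phi_+)$ past $\ell$, (b) keeps $\phi_\ell$ admissible, meaning $0\le\phi_\ell'\le 1$ across the glued transition layer — this forces the layer width $r_\ell$ to grow, since $\phi$ need not have reached $\phi_+$ by $x=\ell-r_\ell$ and the jump must be spread over a region of length at least $\phi_+-\phi(\ell-r_\ell)$ — while (c) the energy cost of the layer still tends to zero. Balancing these requires choosing $r_\ell\to\infty$ slowly (e.g.\ $r_\ell=o(\ell)$ but large enough that $|\phi(\ell-r_\ell)-\phi_+|\le r_\ell$, which holds eventually since $\phi\to\phi_+$), and then controlling the layer energy via the $s(\phi')$ estimate of Lemma \ref{t:el} together with the smallness of $\|u-u_+\|_{L^2}$ and $\|\phi-\phi_+\|_{L^1}$ on the tail. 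The $u_-+u_+=1$ case on $\bb R$ is handled identically, gluing at both endpoints $\pm\ell$, and using $g(u_+,\phi_+,0)=g(u_-,\phi_-,0)$ so that the reference constant is the same on both sides.
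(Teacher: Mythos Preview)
Your overall architecture---upper bound from Proposition~\ref{stimabasso}, lower bound by pointwise decomposition, then $\Gamma$-convergence by truncation---matches the paper, and the $\Glimsup$ construction is essentially the same as the paper's (the paper simply takes $r_\ell=1$, which suffices since $\phi(\ell-1)\to\phi_+$). However, your argument for the lower bound \eqref{stime2} contains a genuine error.

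You claim that the term $(1-u)(\phi-\phi_+)+\big[\log(1+e^{\phi_+})-\log(1+e^{\phi})\big]$ is nonnegative and comparable to $|\phi-\phi_+|$. This is false: since $\phi\le\phi_+$, the first summand is $\le 0$, and by convexity of $t\mapsto\log(1+e^t)$ one has $\log(1+e^{\phi_+})-\log(1+e^{\phi})\le u_+(\phi_+-\phi)$. Hence the sum is at most $(u-(1-u_+))(\phi_+-\phi)$, which is strictly negative whenever $u<1-u_+$; in particular for $u$ near $0$ it is of order $-(1-u_+)|\phi-\phi_+|$. The Bregman term $s(u)-s(u_+)-s'(u_+)(u-u_+)$ is bounded pointwise (since $u\in[0,1]$) and cannot absorb a term that is negative and linear in $|\phi-\phi_+|$ over a set of large measure. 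So your decomposition does not yield the $L^1$ control on $\phi-\phi_+$.

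The paper handles this by a two-step bootstrap that you are missing. First it minimizes pointwise over $u$: since $\min_u g(u,\phi,p)=s(p)+f(\phi)$ with $f(q)=q-2\log(1+e^q)$, one gets $\mc G^+(u,\phi)\ge\int_0^\infty[s(\phi')+f(\phi)-f(\phi_+)]\,dx$. The key observation is that $f$ is \emph{concave} and, because $u_-+u_+>1$ (equivalently $\phi_-+\phi_+>0$), one has $f(\phi_-)>f(\phi_+)$; hence $f(\phi)-f(\phi_+)\ge m(\phi_+-\phi)$ with $m>0$, which together with Lemma~\ref{t:el} gives the $L^1$ bound on $\phi-\phi_+$. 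Only then, in a second step, does the paper recover the $L^2$ bound on $u-u_+$: it writes $\mc G^+(u,\phi)-\mc G^+((s')^{-1}(\phi),\phi)$ as a Bregman integral in $u$ centered at $(s')^{-1}(\phi)$, and uses the already established $L^1$ bound on $\phi-\phi_+$ to shift the center to $u_+$ at the cost of a controlled error. This order of operations---first $\phi$, then $u$---is essential; the direct separation you attempt does not work because the coupling term $(1-u)(\phi-\phi_+)$ has the wrong sign.

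A smaller point: your lower semicontinuity argument via ``Fatou'' is not quite right either, since the integrand $g(u,\phi,\phi')-g(u_+,\phi_+,0)$ is not nonnegative. The paper localizes to $(0,L)$, uses standard lower semicontinuity there, and then shows that the tail contribution $\mc G^+_{(L,+\infty)}(u_n,\phi_n)$ is asymptotically nonnegative via the same concavity-of-$f$ argument as above.
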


In contrast to the previous case, in view of the translational
invariance of the limiting functional $\mc G$, the sequence $\{\mc
G_\ell\}$ is not equi-coercive. This loss of compactness takes place
because the ``interface'' between $u_-$ and $u_+$ can escape to
infinity with a bounded energy cost. However, as we state below, the
compactness of sequences with equibounded energy can be recovered if
we identify functions modulo translations. Recall that we denote by
$\tau_z$ the translation by  $z\in \bb R$.

\begin{theorem}
  \label{t:2}
  Let $0<u_-<u_+<1$ be such that $u_-+u_+=1$.
  \begin{itemize}
  \item[(i)] There exists a constant $C \in (1,+\infty)$ such that for any
    $(u,\phi)\in \mc D$
    \begin{eqnarray}\label{stime3}
      \mc G(u,\phi) &\le& 
      C\big(\|u - \vartheta_{u_-,u_+}\|_{L^2(\bb R)} 
      + \|\phi -\vartheta_{\phi_-,\phi_+}\|_{L^1(\bb R)} \big)+ C,
      \\
      \label{stime4}
      \mc G(u,\phi) &\ge& \frac{1}{C}\big(\|u - \tau_z
        \vartheta_{u_-,u_+}\|_{L^2(\bb R)} + \|\phi - \tau_z
        \vartheta_{\phi_-,\phi_+}\|_{L^1(\bb R)} \big)- C, 
    \end{eqnarray}
    for some $z\in\bb R$ depending on $(u,\phi)$. Moreover, the
    functional $ \mc G$ is lower semicontinuous on $\mc X$.  
    Finally, the set of minimizers of $\mc G$ is the one-parameter
    family of solutions to \eqref{ubin}.
  \item[(ii)] Let $\{(u_\ell,\phi_\ell)\}\subset \mc X $ and assume
    $\limsup_{\ell\to\infty} \mc G_\ell (u_\ell,\phi_\ell) <
    +\infty$.  Then there exists a sequence $\{z_\ell\}\subset \bb R$
    such that $\{(\tau_{z_\ell}
    u_{\ell},\tau_{z_\ell}\phi_{\ell})\}$ is precompact in $\mc X$.
    Moreover, the sequence of functionals $\{\mc G_\ell\}_{\ell>0}$
    $\Gamma$-converges to $\mc G$ as $\ell\to \infty$.  Finally,
    $(\upbar{u}_{\ell},\upbar{\phi}_{\ell}) \to (\upbar u,\upbar\phi)$
    as $\ell\to\infty$. 
 \end{itemize}
\end{theorem}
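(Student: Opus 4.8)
The plan is to treat part (i) much as in Theorem \ref{t:1}(i), but taking care of the translational degeneracy, and then to bootstrap part (ii) from part (i) together with a compactness-modulo-translation argument. For the upper bound \eqref{stime3} I would simply integrate the pointwise estimate already established in Proposition~\ref{stimabasso} (case $u_-+u_+=1$), which directly gives $\mc G(u,\phi)\le C(\|u-\vartheta_{u_-,u_+}\|_{L^2}+\|\phi-\vartheta_{\phi_-,\phi_+}\|_{L^1})+C$; here $\mc G$ finite on $\mc D$ is exactly what makes this meaningful. The lower bound \eqref{stime4} is the delicate point. The integrand $g(u,\phi,\phi')-g(u_+,\phi_+,0)$ is, after completing the square as in the proof of Proposition~\ref{stimabasso}, bounded below by a nonnegative combination of $(u-(s')^{-1}(\phi))^2$, $|s(\phi')|$-type terms, and a term controlling how far $\phi$ is from the ``profile'' $\vartheta_{\phi_-,\phi_+}$; but because $\phi$ need not be pinned at the origin, the natural comparison profile is a \emph{translate} $\tau_z\vartheta_{\phi_-,\phi_+}$. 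I would choose $z$ to be (say) a point where $\phi(z)=0$ — equivalently where the optimal $u$ crosses $1/2$ — and then argue that, away from a bounded neighbourhood of $z$ of controlled size, $g(u,\phi,\phi')-g(u_+,\phi_+,0)$ is bounded below by a positive multiple of $|u-\tau_z\vartheta_{u_-,u_+}|^2+|\phi-\tau_z\vartheta_{\phi_-,\phi_+}|$, absorbing the bounded neighbourhood into the additive constant $-C$. The identification of the minimizers is then standard: by \eqref{stime4} any minimizing configuration has, modulo translation, bounded $L^2\times L^1$ distance from $\vartheta$, the Euler--Lagrange equations force $(s')^{-1}(\phi)$ to solve \eqref{ubin}, and \eqref{ubin} has exactly the one-parameter family $\{\tau_z\upbar u\}$; lower semicontinuity of $\mc G$ follows from Fatou together with the pointwise bound, exactly as for $\mc G^\pm$.

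For part (ii), the $\Glimsup$ inequality is the easy half: given $(u,\phi)\in\mc D$ one truncates, replacing $\phi$ by $\phi_\pm$ and $u$ by $u_\pm$ outside $[-\ell,\ell]$ and reconnecting through a short transition layer of length $o(1)$ whose energy contribution vanishes; since $\mc G$ only sees the tails through the $L^2\times L^1$ integrability built into $\mc D$, the cut-off sequence converges in $\mc X$ and its energies converge to $\mc G(u,\phi)$. The $\Gliminf$ inequality: given $(u_\ell,\phi_\ell)\to(u,\phi)$ in $\mc X$ with $\liminf\mc G_\ell(u_\ell,\phi_\ell)<\infty$, along the relevant subsequence the integrands are bounded in $L^1_{loc}$ and nonnegative up to the subtracted constant, so Fatou on bounded intervals plus the explicit form of the integrand give $\liminf\mc G_\ell(u_\ell,\phi_\ell)\ge \mc G(u,\phi)$ — the only subtlety is that weak* convergence of $u_\ell$ does not directly control $\int s(u_\ell)$, but this is handled by convexity of $s$ as in the bounded-interval analysis of \cite{bd}, and the constraint $0\le\phi_\ell'\le 1$ gives $\phi_\ell\to\phi$ uniformly on compacts after a diagonal argument.

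The genuinely new ingredient is the compactness-modulo-translation statement. Suppose $\sup_\ell\mc G_\ell(u_\ell,\phi_\ell)\le M<\infty$. For each $\ell$ pick $z_\ell$ with $\phi_\ell(z_\ell)=0$ (this exists since $\phi_\ell$ runs from $\phi_-<0$ to $\phi_+>0$ and is continuous). Set $\tilde u_\ell=\tau_{z_\ell}u_\ell$, $\tilde\phi_\ell=\tau_{z_\ell}\phi_\ell$; by translation invariance of $\mc G$ and the fact that $\mc G_\ell(u_\ell,\phi_\ell)\ge \mc G(u_\ell,\phi_\ell)$ (the $\ell$-functional being a constrained version of $\mc G$), the lower bound \eqref{stime4}, applied with \emph{this} choice of $z_\ell$ as the crossing point, gives $\|\tilde u_\ell-\vartheta_{u_-,u_+}\|_{L^2(\bb R)}+\|\tilde\phi_\ell-\vartheta_{\phi_-,\phi_+}\|_{L^1(\bb R)}\le C(M+C)$, uniformly in $\ell$. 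This uniform bound, together with $0\le\tilde\phi_\ell'\le1$ and $0\le\tilde u_\ell\le1$, yields: (a) $\{\tilde\phi_\ell\}$ is equibounded and equi-Lipschitz, hence by Ascoli--Arzel\`a precompact in $\mc C$ for uniform convergence on compacts, and the $L^1$ tail bound upgrades this to convergence in the full norm of $\mc C$; (b) $\{\tilde u_\ell\}$ is bounded in $L^\infty$, hence weak* precompact, and the $L^2$ tail bound identifies the weak* limit as lying in $\mc D$. Hence $\{(\tilde u_\ell,\tilde\phi_\ell)\}$ is precompact in $\mc X$. Finally, applying this to a minimizing sequence and invoking the equi-coercivity-modulo-translation together with the $\Gamma$-convergence, the minimizers $(\upbar u_\ell,\upbar\phi_\ell)$ — which are already normalized by $\upbar u_\ell(0)=1/2$ by construction of \eqref{ubarab} in the symmetric interval, so no translation is needed — converge to the unique minimizer of $\mc G$ with $\upbar u(0)=1/2$, namely $(\upbar u,\upbar\phi)$. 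I expect the main obstacle to be making the ``bounded bad neighbourhood of $z_\ell$'' in the lower bound \eqref{stime4} genuinely of size independent of $\ell$: one must quantify, using only the energy bound, that $\phi_\ell$ cannot linger near $0$ over a long interval, which is where the strict positivity of the integrand away from the profile (a consequence of $\phi_-<0<\phi_+$ and the $s(\phi')$ term) must be turned into an explicit length estimate.
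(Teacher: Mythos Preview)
Your approach is correct and shares the paper's key idea: pick $z$ (respectively $z_\ell$) where $\phi$ crosses zero and split the line there. The paper's execution, however, is more streamlined: it reduces \emph{every} step of Theorem~\ref{t:2} to the already-proved Theorem~\ref{t:1} via the decomposition $\mc G(u,\phi)=\mc G_{(-\infty,z)}(u,\phi)+\mc G_{(z,+\infty)}(u,\phi)$, identifying each piece with a translate of $\mc G^\pm$ (using $g(u_-,\phi_-,0)=g(u_+,\phi_+,0)$). In particular, \eqref{stime4} follows immediately from \eqref{stime2} applied on each half-line, lower semicontinuity of $\mc G$ from that of $\mc G^\pm$, and both the compactness-modulo-translation and the $\Gamma$-convergence in (ii) from the corresponding statements for $\{\mc G^\pm_\ell\}$ in Theorem~\ref{t:1}(ii). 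This completely sidesteps the ``bounded bad neighbourhood of $z$'' obstacle you flag at the end: no separate length estimate is needed, because the half-line bound \eqref{stime2} already contains it.

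Your more hands-on arguments would also go through, with one correction. In your $\Gamma$-liminf sketch you write that the integrand is ``nonnegative up to the subtracted constant'', but this is false: the term $s(\phi')$ is nonpositive on $[0,1]$, so $g(u,\phi,\phi')-g(u_+,\phi_+,0)$ genuinely changes sign and Fatou does not apply directly. You must either control the tails as in \eqref{finalrelax} from the proof of Theorem~\ref{t:1}(i), or---more simply---observe that $\mc G_\ell(u_\ell,\phi_\ell)=\mc G(u_\ell,\phi_\ell)$ whenever $(u_\ell,\phi_\ell)\in B_\ell$, so the $\Gamma$-liminf inequality is just the lower semicontinuity of $\mc G$ already established in part~(i).
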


\begin{remark}
  \label{t:1r}
  {\rm Recall \eqref{fpm}, set $\mc F^\pm (u)=\inf_\phi \mc
    G^\pm(u,\phi)$ and $\mc F(u)=\inf_\phi \mc G(u,\phi)$.
    Theorems~\ref{t:1} and \ref{t:2}  imply the
    $\Gamma$-convergence of the sequences $\{\mc F^\pm_\ell\}$ and
    $\{\mc F_\ell\}$ to $\mc F^\pm$ and $\mc F$, respectively.  }
\end{remark}

\begin{proof}[Proof of Theorem \ref{t:1}]
  We prove the statements only in the case $u_-+u_+>1$.

  \smallskip
  \noindent \emph{Proof of \emph{(i)}.}  
  The upper bound \eqref{stime1} is a direct consequence of
  Proposition~\ref{stimabasso}. In order to prove the lower bound
  \eqref{stime2}, we first show that there exists
  $C_1=C_1(\phi_-,\phi_+)$ such that
  \begin{equation}
    \label{sl1}
    \mc G^+(u,\phi)  \ge \frac{1}{C_1}\int_0^{+\infty}\big( \phi_+ -
    \phi \big) \, dx - C_1\;. 
  \end{equation}
  Observe that $s' : (0,1)\to \bb R$ is given by
  $s'(p)=\log[p/(1-p)]$.  Hence $(s')^{-1}: \bb R \to (0,1)$ is given
  by $(s')^{-1}(q)=e^q/(1+e^q)$.  Therefore
  \begin{equation*}
    f(q) :=
    g\big((s')^{-1}(q),q,0\big)= q-2\log(1+e^q)  
  \end{equation*}
  and in particular, $g(u_+,\phi_+,0)= f(\phi_+)$.  By the
  strict convexity of $g(\cdot,q,p)$ for a fixed $(q,p)\in
  [\phi_-,\phi_+]\times [0,1]$, the infimum of $\mc G^+ (\cdot
  ,\phi)$ for a fixed $\phi \in \mc C^+$ is achieved when $u$
  satisfies $s'(u)=\phi$.  Hence
  \begin{equation*}
    \mc G^+(u,\phi) \ge \mc G^+ \big( (s')^{-1}(\phi), \phi \big) = 
    \int^\infty_0 \big[ s(\phi') + f(\phi)
    -f(\phi_+)\big]\,dx\;. 
  \end{equation*}
  Since the real function $f$ is concave, for any
  $q\in[\phi_-,\phi_+]$
  \begin{equation}\label{defemme}
    f(q) -f(\phi_+) \ge 
    \frac{f(\phi_-) -f(\phi_+)}{\phi_+-\phi_-} (\phi_+ -q)
    =: m (\phi_+ -q)\;. 
  \end{equation}
  It is simple to check that $m>0$ because $\phi_-+\phi_+ >0$.
  We thus deduce
  \begin{equation}
    \label{infimum}
    \begin{split}
    & \mc G^+(u,\phi)   
      \; \ge\; \frac m2 \int_0^\infty \big[ \phi_+- \phi \big]\,dx 
    \\
    &\qquad +  
    \inf \Big\{
    \int_0^\infty \big[ s(\psi') + \tfrac m2
    \big( \phi_+- \psi \big) \big]\,dx
    \,,\:  \psi\in \mc C^+ \,:\: \psi -\phi_+  \in {L^1(\bb R_+)} \Big\}  \: .
    \end{split}
 \end{equation}
  In view of Lemma~\ref{t:el}, the infimum on the right hand side
  above is finite. This concludes the proof of the bound \eqref{sl1}.

  We next prove the $L^2$ bound on $u$.  Since the right hand
  side of \eqref{infimum} is bounded from below, there exists a
  constant $C_2=C_2(\phi_-,\phi_+)$ such that for any $\phi\in\mc C^+$ 
  $\mc G^+\big( (s')^{-1}(\phi),\phi\big)\ge - C_2$. 
  Therefore 
  \begin{equation*}
    \begin{split}
      \mc G^+(u,\phi) & \; \ge \; \mc G^+(u,\phi) 
      - \mc G^+\big((s')^{-1}(\phi),\phi\big) - C_2
      \\
      & \; = \; \int_0^\infty \big\{ s(u) - \phi \,u - \big[ s\big(
      (s')^{-1}(\phi) \big) - \phi \: (s')^{-1}(\phi) \big]
      \big\}\, dx - C_2 \: .
    \end{split}
  \end{equation*}
  Since $u \in [0,1]$, $(s')^{-1}$ is locally Lipschitz on $\bb R$,
  $(s')^{-1}(\phi_+) = u_+ \in (0,1)$ and $s$ is locally Lipschitz
  in $(0,1)$, the $L^1$ bound on $\phi -\phi_+$ implies there
  exists $C_3$ such that
  \begin{equation*}
    \mc G^+(u,\phi) \;\ge\;  \int_0^\infty \big[ s(u) - s(u_+) - s'(u_+)
    (u-u_+) \big]\, dx - C_3 \| \phi - \phi_+\|_{L^1(\bb R_+)} - C_2\;.
  \end{equation*}
  The proof is completed by the uniform convexity of $s$ on $[0,1]$.

  To prove the lower semicontinuity of $\mc G^+$, given a sequence
  $\{(u_n,\phi_n)\} \subset \mc X^+$ converging to $(u,\phi)$, we need to
  show that $\mc G^+(u,\phi) \le \liminf_n \mc G^+(u_n,\phi_n)$. We can
  clearly assume that $\liminf_n \mc G^+(u_n,\phi_n)< +\infty$, and
  therefore, by taking if necessary a subsequence, that
  $\{(u_n,\phi_n)\}$ has equibounded energy.  In particular, by
  \eqref{stime2} we deduce that $u-u_+$ belongs to $L^2(\bb R^+)$ and
  $\phi-\phi_+$ belongs to $L^1(\bb R^+)$.  Again, from \eqref{stime2} we
  easily deduce that
  \begin{equation}\label{coerfi}
    \lim_{L\to\infty} \liminf_n \phi_n(L) = \phi_+.
  \end{equation}
  Given an interval $I\subseteq \R$, we introduce the localized
  functional $\mc G^+_{I}$ defined by
  \begin{equation}\label{localized}
    \mc G^+_{I}(u,\phi) := \int_I \big[ g (u,\phi,\phi') 
    - g(u_+,\phi_+,0) \big] \, dx \: .
  \end{equation}
  By the convexity of $s$, for each $L>0$ the functional
  $\mc G^+_{(0,L)}$  
  is lower semicontinuous on $\mc X^+$. Since by
  Proposition~\ref{stimabasso} $\lim_{L\to\infty} \mc
  G^+_{(L,+\infty)}(u,\phi) = 0$, to complete the proof it is thus
  enough to show that
  \begin{equation}
    \label{finalrelax}
    \lim_{L\to\infty} \liminf_{n} \mc G^+_{(L,+\infty)}(u_n,\phi_n) \ge 0 \:. 
  \end{equation}
  To this purpose, let $m$ be as defined in \eqref{defemme}. 
  Arguing as in the proof of \eqref{infimum} 
  \begin{equation*}
    \begin{split}
      \mc G^+_{(L,+\infty)}(u_n,\phi_n) & 
      \ge 
      \inf\Big\{ \int_L^\infty
      \big[ s(\psi') + m \big( \phi_+ - \psi \big) \big]\,dx
      \,,\;\; \psi\,:\: \psi(L) =\phi_n(L) \Big\} 
      \\
      & =
      \inf\Big\{ \int_0^\infty
      \big[ s(\psi') + m \big( \phi_+ - \psi \big) \big]\,dx
      \,,\;\; \psi\,:\: \psi(0) =\phi_n(L) \Big\} \: ,
  \end{split}
  \end{equation*}
  where, of course, $\psi$ is increasing and satisfies
  $\lim_{x\to+\infty}\psi(x)=\phi_+$.  In view of \eqref{coerfi} and
  Lemma~\ref{t:el} the bound \eqref{finalrelax} follows.

  The coercivity of $\mc G^+$ follows trivially from the
  equi-coercivity and $\Gamma$-convergence of the sequence $\{\mc
  G_\ell^+\}$ proven in item (ii) below. 

  Since $\mc G^\pm$ is bounded from below, coercive, and
  lower-semicontinuous, by arguing as in the proof of Proposition
  \ref{equafunc} we conclude that the unique minimizer of $\mc G^\pm$
  is the solution to \eqref{ubpm}.

  \smallskip
  \noindent \emph{Proof of \emph{(ii)}.}  
  Let $\{(u_\ell,\phi_\ell)\} \subset \mc X^+$ be a sequence such that
  $\mc G_\ell(u_\ell,\phi_\ell) \le K$ for some $K\in \bb R$; we next
  show that $\{(u_\ell,\phi_\ell)\} $ is precompact.  To this purpose,
  notice that by the very definition \eqref{Glpm} of $ \mc G_\ell$ we
  have $\mc G_\ell (u_\ell,\phi_\ell) = \mc G (u_\ell,\phi_\ell)$.
  Therefore, by the lower bound \eqref{stime2} we have
  \begin{equation}\label{coer}
    \| u_\ell - u_+\|_{L^2(\bb R^+)} +  \| \phi_\ell -
    \phi_+\|_{L^1(\bb R^+)} \le C(K+C). 
  \end{equation}

  Fix $\epsilon>0$ and let $x_{\epsilon,\ell} := \inf\{x>0\,,\:
  \phi_\ell(x)= \phi_+ -\epsilon\}$.  From \eqref{coer} we
  deduce $\limsup_\ell x_{\epsilon,\ell} < +\infty$.  Since
  $\phi_\ell' \in [0,1]$, we thus deduce the precompactness of
  $\{\phi_\ell\}$ in $\mc C^+(\bb R_+)$. As $L^\infty(\bb R_+;[0,1])$
  is compact with respect to the weak* topology this concludes the
  proof of the equi-coercivity of $\mc G_\ell$.

  In order o prove the $\Gamma$-liminf inequality let
  $(u_\ell,\f_\ell) \to (u,\f)$, and assume without loss of generality
  that $(u_\ell,\f_\ell)$ has equibounded energy. By the very
  definition \eqref{Glpm} of $\mc G_\ell$ and the lower semicontinuity of
  $\mc G$
  \begin{equation*}
    \mc G(u,\f) \le \liminf_\ell \mc G(u_\ell,\f_\ell) 
    =  \liminf_\ell \mc G_\ell(u_\ell,\f_\ell).
  \end{equation*}

  To prove the $\Gamma$-limsup inequality fix $(u,\f) \in \mc X^+$
  with finite energy. We define $(u_\ell,\f_\ell)$ as follows. We set $u_\ell
  \equiv u$ in $[0,\ell]$, and $u_\ell \equiv u_+$ in $(\ell,+\infty)$.
  Moreover, we set $\f_\ell \equiv \f$ in $[0, \ell-1]$, $\f_\ell \equiv \f_+$
  in $[\ell,+\infty)$, and we extend it by affine interpolation on
  $[\ell-1,\ell]$. A direct computation shows that $(u_\ell,\f_\ell)$ is a
  recovery sequence for $(u,\f)$.
\end{proof}

\begin{proof}[Proof of Theorem \ref{t:2}] 
  The proof will be easily achieved by applying Theorem~\ref{t:1} and
  translations invariance arguments.

  \smallskip
  \noindent \emph{Proof of \emph{(i)}.}  
  The upper bound \eqref{stime3} on $\mc G$ is a direct consequence of
  Proposition~\ref{stimabasso}. In order to prove the lower bound
  \eqref{stime4}, let $z\in \bb R$ be such that $\f(z) = (\f_+ +
    \f_{-})/{2} = 0$. The lower bound then follows by applying
  \eqref{stime2} with $\bb R^\pm$ replaced by $\{x\le z\}$ and $\{x\ge
  z\}$.

  We next prove the lower semicontinuity of $\mc G$.  Let $(u_n,\f_n)
  \to (u,\phi)$; by taking, if necessary, a subsequence we assume that
  $\liminf_n \mc G (u_n,\f_n) = \lim_n \mc G(u_n,\phi_n)$. Let
  $\{z_n\}\subset \bb R$ be such that $\phi_n(z_n)=0$. By taking, if
  necessary, a further subsequence we assume that $z_n\to z$ 
  for some $z\in\bb R$. According with the notation introduced in \eqref{localized}, we write
  \begin{equation*}
    \mc G(u_n,\f_n) = \mc G_{(-\infty, z_n)}(u_n,\f_n) 
    +  \mc G_{( z_n, +\infty)}(u_n,\f_n) \;.
  \end{equation*}
  By translation invariance the statement now follows from the lower
  semicontinuity of $\mc G^\pm$, see item (i) in Theorem~\ref{t:1}.

  The last statement will follow once we prove that the pair $(\upbar u,
  \upbar \f)$ is the unique minimizer of $\mc G$ among all $(u,\f)\in
  \mc X$ satisfying $\f(0) = 0$. This readily follows from the
  uniqueness property stated in item (i) of Theorem~\ref{t:1}.

  \smallskip
  \noindent \emph{Proof of \emph{(ii)}.}  
  Given a sequence $\{(u_\ell,\phi_\ell)\}\subset \mc X$
  such that $\mc G_\ell(u_\ell,\phi_\ell)<+\infty$, let
  $\{z_\ell\}\subset \bb R$ be such that $\phi_\ell(z_\ell)=0$.
  Observe that $z_\ell\in (-\ell,\ell)$ and  write 
  \begin{equation*}
    \begin{split}
         \mc G_\ell(u_\ell,\f_\ell) & = \mc G_{(-\ell, z_\ell)}(u_\ell,\f_\ell) 
    +  \mc G_{(z_\ell, \ell)}(u_\ell,\f_\ell) \\
    & = \mc G^-_\ell (\tau_{-z_\ell} u_\ell, \tau_{-z_\ell} \phi_\ell)
    + \mc G^+_\ell (\tau_{-z_\ell} u_\ell, \tau_{-z_\ell} \phi_\ell)
    \;.
    \end{split}
  \end{equation*}
  In view of the equi-coercivity of $\mc G_\ell^\pm$ stated in item
  (ii) Theorem~\ref{t:1}, the compactness property of $\{\mc G_\ell\}$
  follows. 
  Thanks again to item (ii) in Theorem~\ref{t:1}, the above decomposition
  of the energy functionals easily yields the $\Gamma$-convergence
  result.
  
  Finally, the fact that the unique minimizer $(\upbar{u}_{\ell},
  \upbar{\phi}_{\ell})$ of $\mc G_\ell$ converges to $(\upbar{u},
  \upbar{\phi})$ in $\mc X$ follows by the $\Gamma$-convergence and 
  the fact that $\upbar{\phi}_{\ell}(0) = 0$ for every $\ell$.
\end{proof}

\section{Development by $\Gamma$-convergence}
\label{sec3}

In this section we analyze in more detail the functional $\mc G_\ell$
for the choice of the boundary data corresponding to a standing wave
for the Burgers equation in the whole line.
As we discussed previously, when $u_-+u_+=1$ there is a one parameter
family of solutions to \eqref{ubin} given by $\tau_z \upbar{u}$, $z\in
\bb R$.
Accordingly, the minimizers of the limiting functional $\mc G$ in
\eqref{G} are given by $(\tau_z \upbar{u},\tau_z \upbar{\phi})$.  On the other hand, recalling Theorem~\ref{t:2}, the minimizer
of the finite volume energy $\mc G_\ell$ is unique and converges to
$(\upbar{u},\upbar{\phi})$.  The purpose of this section is to provide
a variational framework to select this minimizer, based again on the
notion of $\Gamma$-convergence, and more precisely on the notion of
development by $\Gamma$-convergence introduced in \cite{ab}.

Let $\alpha\in (0,1)$ be such that $u_\pm = (1\pm\alpha)/2$.
Recalling \eqref{Gl} and \eqref{G}, we are interested in the
asymptotic behavior of the functionals $\hat{\mc G}_\ell :\mc X \to
(-\infty,+\infty]$ defined by
\begin{equation}
  \label{hGl}
  \hat{\mc G}_\ell (u,\phi) 
  := e^{\alpha\ell} 
  \big[  \mathcal{G}_\ell (u,\phi)  - \min \mathcal{G} \big] \;,
\end{equation} 
where the exponential rescaling has been introduced to get a non
trivial limit. In particular, as stated below, the unique minimizer of
the limiting functional is $(\upbar{u},\upbar{\phi})$, where
$\upbar{u}$ is the stationary solution satisfying $\upbar{u}(0)=1/2$,
see \eqref{sol1}, and $\upbar{\phi}=s'(\upbar u)$.

\begin{theorem}
  \label{sviluppo}
  Let $\alpha\in (0,1)$ be such that $u_\pm=(1\pm\alpha)/2$.
 \begin{itemize}  
 \item[(i)] 
   Let $\{(u_\ell,\phi_\ell)\}\subset \mc X $ be sequence such that
   $\limsup_{\ell\to\infty} \hat{\mc G}_\ell(u_\ell,\phi_\ell)
   <+\infty$.  Then, up to a subsequence, $(u_\ell,\phi_\ell) \to
   (\tau_{z} \upbar{u},\tau_{z} \upbar{\phi})$ for some $z\in
   \bb R$. In particular, the sequence of functionals 
   $\{\hat{\mc G}_\ell \}$ is equi-coercive.
 \item[(ii)] As $\ell\to\infty$ the sequence of functionals 
   $\{\hat{\mc G}_\ell\}$ $\Gamma$-converges to the functional
   $\hat{\mc G} : \mc X \to (-\infty,+\infty]$ defined by 
   \begin{equation*}
    \hat{\mc G} (u,\f):=
   \begin{cases} 
     \displaystyle{ \frac {8\alpha}{1-\alpha^2}  \cosh(\alpha z) } 
     & \text{ if } (u,\phi) =(\tau_z \upbar u, \tau_z \upbar\phi)
     \text{ for some } z\in \bb R,\\ 
     +\infty & \text{ otherwise.}
   \end{cases}
 \end{equation*}
\end{itemize}
\end{theorem}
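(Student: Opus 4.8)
The plan is to reduce everything to a sharp expansion of $\mc G_\ell$ around the one-parameter family of minimizers $(\tau_z\upbar u,\tau_z\upbar\phi)$ of $\mc G$, exploiting the exponential decay of $\upbar u$ to $u_\pm$. First I would record the exact value of the normalization: since $\min\mc G = \mc G(\upbar u,\upbar\phi)$ and, by translation invariance, $\mc G(\tau_z\upbar u,\tau_z\upbar\phi)=\min\mc G$ for all $z$, the whole content is in how the \emph{finite-volume} constraint in $B_\ell$ (forcing $u=u_\pm$, $\phi=\phi_\pm$ outside $(-\ell,\ell)$) costs energy when one tries to place a shifted profile $\tau_z\upbar u$ inside the window. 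Concretely, for the competitor obtained by cutting off $\tau_z\upbar u$ at $\pm\ell$ and patching to the boundary data, the excess $\mc G_\ell(u,\phi)-\min\mc G$ is, to leading order, governed by the two tail integrals $\int_\ell^\infty[g(\tau_z\upbar u,\ldots)-g(u_+,\phi_+,0)]\,dx$ near the right endpoint and its mirror near the left. Because $\upbar u(x)-u_+$ and $\upbar\phi(x)-\phi_+$ decay like $e^{-2A(x)}$ with $A=u_+-\tfrac12=\alpha/2$ (from \eqref{sol1} and $\tanh$ asymptotics), and $g$ vanishes to second order at $(u_+,\phi_+,0)$ in the $u$-variable but only to \emph{first} order coming from the $s(\phi')$ term, a Taylor expansion of $g$ along the stationary profile shows the dominant tail contribution near $+\ell$ is $\sim c\,e^{-\alpha(\ell-z)}$ and near $-\ell$ is $\sim c\,e^{-\alpha(\ell+z)}$ for an explicit constant $c$; summing and multiplying by $e^{\alpha\ell}$ yields $c(e^{\alpha z}+e^{-\alpha z})=2c\cosh(\alpha z)$, and one identifies $2c=8\alpha/(1-\alpha^2)$ by an explicit quadrature using $\upbar u'=\upbar u(1-\upbar u)-J$ with $J=\tfrac14-A^2=(1-\alpha^2)/4$.

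For part (ii), the $\limsup$ inequality is then the construction above: given $(u,\phi)=(\tau_z\upbar u,\tau_z\upbar\phi)$, take the recovery sequence $(u_\ell,\phi_\ell)$ equal to the shifted stationary profile on $[-\ell,\ell]$ (modified on a unit-length layer near $\pm\ell$, as in the proof of Theorem~\ref{t:1}(ii), to meet the boundary values and the constraint $0\le\phi_\ell'\le1$), and check by the tail estimate that $e^{\alpha\ell}[\mc G_\ell(u_\ell,\phi_\ell)-\min\mc G]\to 8\alpha\cosh(\alpha z)/(1-\alpha^2)$; the interpolation-layer contributions are $o(e^{-\alpha\ell})$ and hence negligible after rescaling. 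For the $\liminf$ inequality, suppose $(u_\ell,\phi_\ell)\to(u,\phi)$ with $\liminf e^{\alpha\ell}[\mc G_\ell(u_\ell,\phi_\ell)-\min\mc G]<\infty$; then in particular $\mc G_\ell(u_\ell,\phi_\ell)\to\min\mc G$, so by Theorem~\ref{t:2} (after a translation $z_\ell$ with $\phi_\ell(z_\ell)=0$) the limit $(u,\phi)$ must be a minimizer of $\mc G$, i.e.\ $(\tau_z\upbar u,\tau_z\upbar\phi)$ for some $z$ — but one must rule out the escape $z_\ell\to\pm\infty$, which is exactly where the $e^{\alpha\ell}$ bound bites: if the interface sat at $z_\ell$ with $|z_\ell|$ bounded away from staying finite, the tail cost near the \emph{closer} endpoint would be $\gtrsim e^{-\alpha(\ell-|z_\ell|)}$, and multiplied by $e^{\alpha\ell}$ this blows up unless $z_\ell$ stays bounded; so $z_\ell\to z$ and then $\phi_\ell(z_\ell)=0$ forces $(u,\phi)=(\tau_z\upbar u,\tau_z\upbar\phi)$. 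The matching lower bound on the rescaled excess for such sequences again follows by localizing $\mc G_\ell$ to the two tails $(z_\ell,\ell)$ and $(-\ell,z_\ell)$, comparing each with the infimum of the corresponding one-sided problem with free data at $z_\ell$ (as in the proof of \eqref{finalrelax}), and using lower semicontinuity plus the sharp tail asymptotics. Part (i) is an immediate byproduct: the same argument shows any energy-bounded sequence has $z_\ell$ bounded, hence $(u_\ell,\phi_\ell)$ precompact converging to some $(\tau_z\upbar u,\tau_z\upbar\phi)$.

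The main obstacle is the $\liminf$ inequality at the \emph{sharp} constant, not merely the coercivity: one needs a lower bound on $\mc G_\ell(u_\ell,\phi_\ell)-\min\mc G$ that is tight to leading exponential order for \emph{arbitrary} competitors, not just profiles close to $\tau_z\upbar u$ in a strong norm. I would handle this by the by-now-standard device of subtracting the minimizer: write $\mc G_\ell(u_\ell,\phi_\ell)-\min\mc G = [\mc G_\ell(u_\ell,\phi_\ell)-\mc G_\ell(\tau_{z_\ell}^{-1}\!\cdot\text{clipped }\upbar u,\ldots)] + [\text{that clipped energy}-\min\mc G]$; the second bracket is controlled by the tail asymptotics above, and the first is nonnegative up to an error controlled by the Euler--Lagrange structure — more robustly, one convexity-estimates $g(u,\phi,\phi')-g(\upbar u_\ell,\upbar\phi_\ell,\upbar\phi_\ell')$ from below using that $u\mapsto g$ is uniformly convex and $p\mapsto s(p)$ is convex, reducing the one-sided lower bound to a quadrature identical to the one producing the constant $8\alpha/(1-\alpha^2)$. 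Keeping track that the cross terms between the two endpoints are genuinely of lower order (they are $O(e^{-2\alpha\ell})$) is the only place where a little care is required; everything else is the soft $\Gamma$-convergence machinery already deployed in Sections~2--3.
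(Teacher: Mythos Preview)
Your overall strategy is recognizable but differs from the paper's in a way that matters, and the place you yourself flag as ``the main obstacle'' is where your argument has a genuine gap.

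\smallskip
\textbf{What the paper does.} The paper does \emph{not} build the recovery sequence by clipping $\tau_z\upbar u$ and computing tail integrals. Instead it introduces, for each $z$ and $\ell>|z|$, the function $u^{(z)}_\ell$ obtained by gluing the \emph{finite-interval} stationary solutions $\upbar u_{\ell+z}(\cdot-z)$ on $[-\ell,z)$ and $\upbar u_{\ell-z}(\cdot-z)$ on $[z,\ell)$. Two properties are immediate: first, by symmetry
\[
\mc G_\ell\big(u^{(z)}_\ell,\phi^{(z)}_\ell\big)
=\tfrac12\Big[\mc G_{\ell+z}(\upbar u_{\ell+z},\upbar\phi_{\ell+z})
+\mc G_{\ell-z}(\upbar u_{\ell-z},\upbar\phi_{\ell-z})\Big];
\]
second, and crucially, $(u^{(z)}_\ell,\phi^{(z)}_\ell)$ is the \emph{exact minimizer} of $\mc G_\ell$ under the single scalar constraint $\phi(z)=0$. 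A separate lemma computes the sharp asymptotics $e^{\alpha\ell}\big[\mc G_\ell(\upbar u_\ell,\upbar\phi_\ell)-\mc G(\upbar u,\upbar\phi)\big]\to 8\alpha/(1-\alpha^2)$ via the exact identity for $g(\upbar u_\ell,\upbar\phi_\ell,\upbar\phi_\ell')$ and the expansion of $J-J_\ell$. The $\Gamma$-limsup is then the displayed decomposition; the $\Gamma$-liminf is \emph{one line}: if $\phi_\ell(z_\ell)=0$ then $\mc G_\ell(u_\ell,\phi_\ell)\ge \mc G_\ell(u^{(z_\ell)}_\ell,\phi^{(z_\ell)}_\ell)$, and since $z_\ell\to z$ the same lemma gives the sharp constant.

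\smallskip
\textbf{Where your proposal falls short.} Your $\liminf$ argument never produces an inequality that is sharp at the constant. You write the excess as $[\mc G_\ell(u_\ell,\phi_\ell)-\mc G_\ell(\text{clipped }\tau_{z_\ell}\upbar u)]+[\text{clipped energy}-\min\mc G]$ and assert the first bracket is ``nonnegative up to an error controlled by the Euler--Lagrange structure''. But the clipped profile is \emph{not} a critical point of $\mc G_\ell$ (it solves the whole-line equation $u'=u(1-u)-J$, not the finite-interval one with current $J_\ell$, and has a kink at the patching layer), so there is no first-variation argument making that bracket nonnegative. Your fallback, a convexity estimate on $g(u,\phi,\phi')-g(\upbar u_\ell,\upbar\phi_\ell,\upbar\phi_\ell')$, does not work either: $g$ is convex in $u$ and in $p$ separately, but the cross term $(1-u)\phi$ destroys joint convexity in $(u,\phi)$, so you cannot linearize around $(\upbar u_\ell,\upbar\phi_\ell)$ and discard a nonnegative remainder. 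The missing idea is precisely the constrained-minimizer construction above, which replaces your approximate comparison by an exact one and makes the sharp lower bound automatic. (Your tail heuristic for the limsup is in the right spirit, though note $s(\phi')\sim \phi'\log\phi'$ contributes a factor $x\,e^{-\alpha x}$, not a clean first-order term; the paper avoids this entirely by working with the exact minimum energies $\mc G_\ell(\upbar u_\ell,\upbar\phi_\ell)$.)
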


\begin{remark}
  \label{t:2r}
  {\rm Let $\hat{\mc F}_\ell (u)=\inf_\phi \hat{\mc G}_\ell(u,\phi)$.
    Theorem~\ref{sviluppo} then imply that $\{\hat{\mc F}_\ell\}$ 
    is equi-coercive and $\Gamma$-converges to 
    $\min_\phi \hat{\mc G}(u,\phi)$. Note that the $\Gamma$-limit is finite
    only if $u=\tau_z\upbar u$ for some $z\in\bb R$ and in such a case
    is given by $\hat{\mc G}(\tau_z \upbar u,\tau_z\upbar\phi)$.
  }
\end{remark}

The proof of Theorem~\ref{sviluppo} is based on few computations that
we collect in the following two lemmata.  Recall that $\upbar{u}_\ell$ is
the solution to \eqref{ub} in the symmetric interval $(-\ell,\ell)$
and $\upbar\phi_\ell=s'(\upbar u_\ell)$. In particular, $\upbar
u_\ell$ solves $\upbar u_\ell' = \upbar u_\ell (1-\upbar u_\ell)
-J_\ell$ where $J_\ell\in \bb R$ is the constant satisfying
\begin{equation}
  \label{Jel2}
  \int_{u_-}^{u_+} 
  \frac 1{r(1-r)-J_\ell} \, dr = 2\ell\;.
\end{equation}
Analogously, ${\upbar u}' = \upbar u(1-\upbar u) -J$ where
$J=(1-\alpha^2)/4$. It is simple to check that $J_\ell\uparrow J$ as
$\ell\to\infty$; in the sequel we need however the sharp asymptotic of
$J_\ell$.   

\begin{lemma}
  \label{t:jell}
  Let $\alpha\in (0,1)$ be such that $u_\pm=(1\pm\alpha)/2$. Then
  \begin{equation*}
    \label{Jelto2}
    \lim_{\ell\to \infty} e^{\alpha\ell} \big(J- J_\ell\big) = \alpha^2 \;.
  \end{equation*}
\end{lemma}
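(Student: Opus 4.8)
The plan is to extract the sharp asymptotics of $J_\ell$ directly from the defining relation \eqref{Jel2} by computing the integral $\int_{u_-}^{u_+} [r(1-r)-J_\ell]^{-1}\,dr$ explicitly and then inverting. Write $r(1-r)-J_\ell = \tfrac14 - (r-\tfrac12)^2 - J_\ell = A_\ell^2 - (r-\tfrac12)^2$ with $A_\ell := (\tfrac14 - J_\ell)^{1/2}$, and recall that $A := (\tfrac14 - J)^{1/2} = \alpha/2$ since $J = (1-\alpha^2)/4$. After the substitution $r = \tfrac12 + t$, the left-hand side of \eqref{Jel2} becomes $\int_{-\alpha/2}^{\alpha/2} (A_\ell^2 - t^2)^{-1}\,dt = \frac{1}{A_\ell}\log\frac{A_\ell+t}{A_\ell-t}\Big|_{-\alpha/2}^{\alpha/2} = \frac{1}{A_\ell}\log\frac{(A_\ell+\alpha/2)^2}{(A_\ell-\alpha/2)^2} = \frac{2}{A_\ell}\log\frac{A_\ell+\alpha/2}{A_\ell-\alpha/2}$. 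Since $u_\pm = (1\pm\alpha)/2$, the integration limits are exactly $\pm\alpha/2$, which is what makes the endpoints coincide with $\pm A$ in the limit; this is the structural reason the wave is a standing wave and $J_\ell \uparrow J$.

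So \eqref{Jel2} reads $\frac{2}{A_\ell}\log\frac{A_\ell + \alpha/2}{A_\ell - \alpha/2} = 2\ell$, i.e.
\begin{equation*}
  \log\frac{A_\ell + \alpha/2}{A_\ell - \alpha/2} = A_\ell\,\ell\,.
\end{equation*}
Set $\delta_\ell := A - A_\ell = \alpha/2 - A_\ell \downarrow 0$. Then $A_\ell + \alpha/2 = \alpha - \delta_\ell \to \alpha$ and $A_\ell - \alpha/2 = -\delta_\ell$; but $A_\ell < \alpha/2$ would make the argument of the log negative, so in fact $A_\ell > \alpha/2$, i.e.\ $J_\ell < J$ is consistent with $A_\ell^2 = \tfrac14 - J_\ell > \tfrac14 - J = A^2$, so $A_\ell > A = \alpha/2$ and hence $\delta_\ell := A_\ell - \alpha/2 \downarrow 0$ (I will fix this sign at the outset). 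With this correct sign, $A_\ell + \alpha/2 = \alpha + \delta_\ell$ and $A_\ell - \alpha/2 = \delta_\ell$, so the relation becomes $\log\frac{\alpha + \delta_\ell}{\delta_\ell} = (\alpha/2 + \delta_\ell)\ell$, that is
\begin{equation*}
  \log(\alpha + \delta_\ell) - \log\delta_\ell = \big(\tfrac\alpha2 + \delta_\ell\big)\ell\,.
\end{equation*}
Exponentiating: $\frac{\alpha + \delta_\ell}{\delta_\ell} = e^{(\alpha/2 + \delta_\ell)\ell}$, hence $\delta_\ell = (\alpha + \delta_\ell)\,e^{-(\alpha/2 + \delta_\ell)\ell} = (\alpha + \delta_\ell)\,e^{-\alpha\ell/2}\,e^{-\delta_\ell\ell}$. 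Since $\delta_\ell \to 0$, necessarily $\delta_\ell \ell \to 0$ as well (otherwise the right side would be exponentially small, forcing $\delta_\ell\to 0$ much faster, a contradiction one resolves by a short bootstrap: first $\delta_\ell = O(e^{-\alpha\ell/2})$ so $\delta_\ell\ell\to 0$, then plug back). Therefore $e^{\alpha\ell/2}\delta_\ell \to \alpha$, i.e.\ $\delta_\ell = A_\ell - \tfrac\alpha2 \sim \alpha e^{-\alpha\ell/2}$.

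It remains to convert the asymptotics of $\delta_\ell = A_\ell - A$ into that of $J - J_\ell$. From $A_\ell^2 = \tfrac14 - J_\ell$ and $A^2 = \tfrac14 - J$ we get $J - J_\ell = A_\ell^2 - A^2 = (A_\ell - A)(A_\ell + A) = \delta_\ell(A_\ell + A)$, and $A_\ell + A \to 2A = \alpha$. Hence $e^{\alpha\ell}(J - J_\ell) = e^{\alpha\ell}\delta_\ell(A_\ell + A) = \big(e^{\alpha\ell/2}\delta_\ell\big)^2 \cdot \frac{A_\ell + A}{e^{\alpha\ell/2}\delta_\ell} \cdot e^{\alpha\ell/2}\delta_\ell$ — more simply, $e^{\alpha\ell}(J-J_\ell) = (e^{\alpha\ell/2}\delta_\ell)\cdot(e^{\alpha\ell/2})(A_\ell+A)$ is the wrong grouping; instead write $e^{\alpha\ell}(J - J_\ell) = \big(e^{\alpha\ell/2}\delta_\ell\big)\,\big(e^{\alpha\ell/2}(A_\ell+A)\big)$, which diverges. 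The correct bookkeeping: $J - J_\ell = \delta_\ell(A_\ell + A) \sim \alpha e^{-\alpha\ell/2}\cdot\alpha = \alpha^2 e^{-\alpha\ell/2}$, so it is $e^{\alpha\ell/2}(J - J_\ell) \to \alpha^2$ that one would naively get — but the lemma asserts $e^{\alpha\ell}(J-J_\ell)\to\alpha^2$. This discrepancy signals that $A_\ell - A$ is actually of order $e^{-\alpha\ell}$, not $e^{-\alpha\ell/2}$; the resolution is that the correct transcendental equation has $A_\ell\ell$ on the right and $A_\ell = \tfrac\alpha2 + \delta_\ell$, so in the exponent $A_\ell\ell = \tfrac\alpha2\ell + \delta_\ell\ell$, and one must be careful that $A_\ell\ell$, not $\tfrac\alpha2\ell$, governs the rate — rechecking: $\frac{\alpha+\delta_\ell}{\delta_\ell} = e^{A_\ell\ell}$ gives $\delta_\ell \sim \alpha e^{-A_\ell\ell}$, and since $A_\ell = \alpha/2 + O(\delta_\ell) \to \alpha/2$, we'd still get $\delta_\ell$ of order $e^{-\alpha\ell/2}$. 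The genuine point, which I will track carefully, is that the endpoints of integration are $u_\pm$ with $u_+ - \tfrac12 = \alpha/2$ \emph{exactly}, and $A = \alpha/2$ \emph{exactly}, so the integrand has a square-root singularity at the endpoints in the limit, changing the blow-up rate of the integral near the limit from logarithmic to something that produces the full $e^{-\alpha\ell}$ rate; the clean way is to keep the exact formula $\frac{2}{A_\ell}\log\frac{A_\ell + \alpha/2}{A_\ell - \alpha/2} = 2\ell$, write $A_\ell - \alpha/2 = \varepsilon_\ell$, solve $\varepsilon_\ell = \frac{\alpha + \varepsilon_\ell}{\,\alpha+\varepsilon_\ell + (\text{correction})\,}$... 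I will simply carry the exact identity $\log\frac{\alpha + \varepsilon_\ell}{\varepsilon_\ell} = (\tfrac\alpha2 + \varepsilon_\ell)\ell$ to its conclusion $\varepsilon_\ell = (\alpha+\varepsilon_\ell)e^{-\alpha\ell/2}e^{-\varepsilon_\ell\ell}$, bootstrap to get $\varepsilon_\ell\ell\to 0$ hence $\varepsilon_\ell\sim\alpha e^{-\alpha\ell/2}$, and then $J - J_\ell = \varepsilon_\ell(A_\ell + A) = \varepsilon_\ell(\alpha + \varepsilon_\ell)\sim\alpha^2 e^{-\alpha\ell/2}$; since this contradicts the stated $e^{\alpha\ell}$ normalization, I conclude the rescaling in the paper is in terms of $e^{\alpha\ell}$ because $\ell$ there denotes the half-length and the relevant exponential decay of $\upbar u_\ell$ over the full interval of length $2\ell$ is $e^{-\alpha\cdot(2\ell)/2} = e^{-\alpha\ell}$ — i.e.\ my $A_\ell\ell$ should be read with the full interval, giving $e^{-\alpha\ell}$ directly. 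Concretely the exact relation is $\frac{1}{A_\ell}\log\frac{A_\ell+\alpha/2}{A_\ell-\alpha/2} = 2\ell$, so $\log\frac{\alpha+\varepsilon_\ell}{\varepsilon_\ell} = 2A_\ell\ell = (\alpha + 2\varepsilon_\ell)\ell$, whence $\varepsilon_\ell = (\alpha+\varepsilon_\ell)e^{-\alpha\ell}e^{-2\varepsilon_\ell\ell}$; bootstrapping gives $\varepsilon_\ell\ell\to 0$ and $e^{\alpha\ell}\varepsilon_\ell\to\alpha$. Then $J - J_\ell = \varepsilon_\ell(A_\ell + A) = \varepsilon_\ell(\alpha + \varepsilon_\ell)$, so $e^{\alpha\ell}(J - J_\ell) = (e^{\alpha\ell}\varepsilon_\ell)(\alpha + \varepsilon_\ell) \to \alpha\cdot\alpha = \alpha^2$, as claimed.

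The one real obstacle is the bootstrap argument showing $\varepsilon_\ell\ell\to 0$: a priori $\varepsilon_\ell$ could decay at some rate between $e^{-\alpha\ell}$ and faster, and one needs to rule out that $\varepsilon_\ell\ell$ stays bounded away from $0$ or diverges. This is handled in two steps: first, from $\varepsilon_\ell = (\alpha+\varepsilon_\ell)e^{-2A_\ell\ell}$ with $A_\ell$ bounded above (say $A_\ell \le \tfrac12$) and below (eventually $A_\ell \ge \alpha/4$, since $\varepsilon_\ell\to 0$), one gets $\varepsilon_\ell \le (\alpha + 1)e^{-\alpha\ell/2}$ for large $\ell$, hence $\varepsilon_\ell \ell \to 0$; second, feeding $\varepsilon_\ell\ell\to 0$ back into $e^{\alpha\ell}\varepsilon_\ell = (\alpha+\varepsilon_\ell)e^{-2\varepsilon_\ell\ell} \to \alpha$ closes the argument. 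All other steps — the exact evaluation of the integral in \eqref{Jel2}, the algebraic identity $J - J_\ell = (A_\ell - A)(A_\ell + A)$, and the final limit — are routine. I will present the exact transcendental equation, the two-line bootstrap, and the conclusion, keeping intermediate partial-fraction and $\tanh$ computations to a minimum.
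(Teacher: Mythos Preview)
Your final argument is correct, and in fact the paper itself remarks that ``the integral on the left hand side of \eqref{Jel2} can be calculated explicitly and the proof of the lemma can be achieved by few tedious computations'' --- which is precisely your route --- before opting for a different presentation. So the approaches genuinely differ: you evaluate the integral in closed form via partial fractions to obtain the transcendental relation
\[
\log\frac{A_\ell+\alpha/2}{A_\ell-\alpha/2}=2A_\ell\ell,\qquad A_\ell=\Big(\tfrac14-J_\ell\Big)^{1/2},
\]
set $\varepsilon_\ell=A_\ell-\alpha/2$, and run a two-step bootstrap ($\varepsilon_\ell\le(\alpha+\varepsilon_\ell)e^{-\alpha\ell}$ forces $\varepsilon_\ell\ell\to 0$, then $e^{\alpha\ell}\varepsilon_\ell\to\alpha$), finishing with $J-J_\ell=\varepsilon_\ell(\alpha+\varepsilon_\ell)$. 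The paper instead works directly with $E_\ell=J-J_\ell$, changes variable to $r=(1+\alpha-s)/2$, and splits the integrand as $(2\alpha s+4E_\ell)^{-1}$ plus a remainder $R(E_\ell)$ whose limit $R(0)=\alpha^{-1}\log 2$ is computed separately; the relation $\ell=\alpha^{-1}\log(\alpha^2/E_\ell)+o(1)$ then drops out without ever writing the antiderivative. Your method is more elementary and self-contained; theirs avoids the transcendental inversion and the bootstrap at the cost of introducing and evaluating the auxiliary $R(E)$.

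One presentational point: the proposal as written spends most of its length on a factor-of-two slip in the antiderivative (you first write the primitive as $\frac{1}{A_\ell}\log\frac{A_\ell+t}{A_\ell-t}$ instead of $\frac{1}{2A_\ell}\log\frac{A_\ell+t}{A_\ell-t}$), get the wrong rate $e^{-\alpha\ell/2}$, and only correct it after a page of back-and-forth. In the clean version simply start from $\frac{1}{A_\ell}\log\frac{A_\ell+\alpha/2}{A_\ell-\alpha/2}=2\ell$ (which is what the symmetric evaluation actually gives), and the rest is three lines. The bootstrap is fine as stated; note that the crude bound $\varepsilon_\ell(1-e^{-\alpha\ell})\le\alpha e^{-\alpha\ell}$ already gives $\varepsilon_\ell=O(e^{-\alpha\ell})$, so no separate ``$A_\ell\ge\alpha/4$'' step is needed.
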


\begin{proof}
  The integral on the left hand side of \eqref{Jel2} can be calculated
  explicitly and the proof of the lemma can be achieved by few
  tedious computations. We give however a lighter argument below.

  Let $E_\ell : = J - J_\ell$, and notice that $E_\ell \downarrow 0$ as $\ell \to \infty$.  By
  symmetry, \eqref{Jel2} is equivalent to
  \begin{equation*}
    \int_{1/2}^{(1+\alpha)/2} 
    \frac 1{r(1-r)-J +E_\ell} \, dr = \ell \;.
  \end{equation*}
  Performing the change of variables $r=(1+\alpha-s)/2$, this is
  further equivalent to 
  \begin{equation}
    \label{Jel2bis}
    2 \int_{0}^{\alpha} 
    \frac 1{2\alpha s -s^2 + 4E_\ell} \, ds = \ell \;,
  \end{equation}
  where we used that $J=(1-\alpha^2)/4$. 
  We rewrite the left hand side above as follows 
  \begin{equation}
    \label{Jel2tris}
    \begin{split}
    2 \int_{0}^{\alpha} 
    \frac 1{2\alpha s -s^2 + 4E_\ell} \, ds
    &\;=\; 2 \int_{0}^{\alpha} 
    \frac 1{2\alpha s + 4E_\ell} \, ds + R(E_\ell)
    \\
    &\;=\;  \frac 1\alpha 
    \log \Big[ \frac{\alpha^2}{2E_\ell}
    \Big(1+ \frac {2E_\ell}{\alpha^2}\Big)\Big]
    + R(E_\ell) \: ,
    \end{split} 
 \end{equation}
 where
 \begin{equation*}
   R(E) :=   2 \int_{0}^{\alpha} 
   \Big[\frac 1{2\alpha s -s^2 +4E} -\frac 1{2\alpha s+ 4E}\Big]
   \, ds\;.
 \end{equation*}
 It is simple to check that
 \begin{equation*}
   R(0)=\lim_{E\downarrow 0} R(E)
   = 2 \int_{0}^{\alpha} 
   \frac {s^2}{2\alpha s (2\alpha s -s^2)}\, ds
   = \frac 1\alpha\log 2\;.
 \end{equation*}
 From \eqref{Jel2bis} and \eqref{Jel2tris} we then get 
 \begin{equation*}
   \ell=  
   \frac 1\alpha \log \frac{\alpha^2}{E_\ell}
   + \frac 1\alpha \log \Big(1+ \frac {2E_\ell}{\alpha^2}\Big)
    + R(E_\ell) - R(0) \: ,
 \end{equation*}
 which, recalling $E_\ell\downarrow 0$ as $\ell\to\infty$, yields 
 the statement.
\end{proof}

As follows from Theorem~\ref{t:2}, $\mc G_\ell(\upbar u_\ell,\upbar
\phi_\ell)\to \mc G (\upbar u,\upbar\phi)$ as $\ell\to \infty$.  We
next compute the sharp asymptotic of the energy of the minimizer
$(\upbar u_\ell,\upbar\phi_\ell)$.

\begin{lemma}
  \label{lemma}
  Let $\alpha\in (0,1)$ be such that $u_\pm=(1\pm\alpha)/2$. Then
  \begin{equation*}
    \lim_{\ell\to\infty} e^{\alpha\ell}
    \Big[ \mc G_\ell(\upbar u_\ell,\upbar \f_\ell) 
    -\mc G(\upbar u,\upbar \f) \Big]= 
    \frac {8\alpha}{1-\alpha^2}.
  \end{equation*}
\end{lemma}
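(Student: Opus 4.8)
The plan is to compute the excess energy $\mc G_\ell(\upbar u_\ell,\upbar\f_\ell)-\mc G(\upbar u,\upbar\f)$ by exploiting that both $(\upbar u_\ell,\upbar\f_\ell)$ and $(\upbar u,\upbar\f)$ are explicit, being tanh profiles governed by the first-order equations $\upbar u_\ell' = \upbar u_\ell(1-\upbar u_\ell)-J_\ell$ and $\upbar u' = \upbar u(1-\upbar u)-J$ with $\f = s'(u)$, hence $\f' = s''(u)u' = u'/[u(1-u)]$. Along such solutions the integrand $g(u,\f,\f')-g(u_+,\f_+,0)$ simplifies considerably: using $\f=s'(u)$ the term $s(u)+(1-u)\f-\log(1+e^\f)$ reduces to $f(\f)=\f-2\log(1+e^\f)$ as computed in the proof of Theorem~\ref{t:1}, and $s(\f') = s\big(u'/(u(1-u))\big)$. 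So the first step is to write
$\mc G_\ell(\upbar u_\ell,\upbar\f_\ell) = \int_{-\ell}^{\ell}\big[ s(\upbar\f_\ell') + f(\upbar\f_\ell) - f(\phi_+)\big]\,dx$, and similarly for $\mc G(\upbar u,\upbar\f)$ with integral over $\bb R$, and then to pass both integrals to the same domain so the difference can be split into a "bulk" discrepancy ($J_\ell$ vs.\ $J$ on $(-\ell,\ell)$) and a "tail" contribution ($\upbar u$'s integrand on $|x|>\ell$).

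Next I would change variables from $x$ to $r=\upbar u_\ell(x)$ (resp.\ $r=\upbar u(x)$), so that $dx = dr/(r(1-r)-J_\ell)$, turning everything into integrals over $r\in(u_-,u_+)$ of elementary rational/logarithmic functions of $r$ depending on the parameter $E_\ell = J-J_\ell$. The tail term $\int_{|x|>\ell}[s(\upbar\f')+f(\upbar\f)-f(\phi_+)]\,dx$ is, by the exponential approach $\upbar u(x)-u_\pm \sim \mathrm{const}\cdot e^{-\alpha|x|}$, of order $e^{-\alpha\ell}$ with an explicitly computable constant — this is where the $8\alpha/(1-\alpha^2)$ ultimately comes from, since near $u=u_+$ the integrand vanishes to second order, $s(\f')+f(\f)-f(\phi_+) \approx c\,(u-u_+)^2$ with $c$ determined by $s''(u_+)=1/(u_+(1-u_+)) = 4/(1-\alpha^2)$ and the linearization $\f' \approx -(u-u_+)\cdot(\cdots)$. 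The bulk term is $\int_{-\ell}^{\ell}\big\{[s(\upbar\f_\ell')-s(\upbar\f')] + [f(\upbar\f_\ell)-f(\upbar\f)]\big\}\,dx$ plus the mismatch from the differing domains; expanding to first order in $E_\ell$ and invoking Lemma~\ref{t:jell} ($e^{\alpha\ell}E_\ell \to \alpha^2$) converts this into a finite limit. One should expect a partial cancellation: the minimality of $\upbar u_\ell$ on $(-\ell,\ell)$ means the first-order-in-$E_\ell$ variation of the bulk integral, for fixed endpoints, vanishes, so the surviving contribution is a boundary term, again of size $e^{-\alpha\ell}$ after using Lemma~\ref{t:jell}.

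The cleanest route, which I would actually pursue, is to avoid separating bulk and tail and instead write the whole difference as a single explicit function of $E_\ell$ via the $r$-substitution: $\mc G_\ell(\upbar u_\ell,\upbar\f_\ell) = H(E_\ell)$ and $\mc G(\upbar u,\upbar\f) = H(0) + (\text{tail at }\infty)$, where $H(E)$ collects $\int_{u_-}^{u_+}$ of the explicit integrand with parameter $E$, and then Taylor-expand $H$ near $E=0$, noting $H(E)-H(0) = H'(0)E + o(E)$ with $H'(0)$ finite and nonzero; multiplying by $e^{\alpha\ell}$ and using $e^{\alpha\ell}E_\ell\to\alpha^2$ gives $\alpha^2 H'(0)$, which the computation should identify with $8\alpha/(1-\alpha^2)$. \textbf{The main obstacle} is bookkeeping: ensuring the tail integral $\int_{|x|>\ell}$ and the domain-mismatch terms are consistently accounted for (several pieces are individually $O(e^{-\alpha\ell})$ and one must get all constants right), and verifying that the logarithmic singularities in the $r$-integrand as $r\to u_\pm$ (present because $r(1-r)-J$ vanishes at $r=u_\pm$ when $E=0$) combine so that $H(E)-H(0)$ is genuinely $O(E)$ rather than $O(E\log E)$ — this is exactly the phenomenon already handled in the proof of Lemma~\ref{t:jell}, where the $\log E_\ell$ terms cancel against $\ell$, so the same mechanism must be tracked here.
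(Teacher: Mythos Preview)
Your strategy --- simplify the integrand via $\phi=s'(u)$ and the first-order relation $u'=u(1-u)-J_\ell$, pass to the variable $r=\upbar u_\ell(x)$ so that the energy becomes an explicit function of $E_\ell=J-J_\ell$, then invoke Lemma~\ref{t:jell} --- is exactly what the paper does. Two points, however, need correction before the computation will go through.

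First, the ``tail at $\infty$'' is spurious. Under $r=\upbar u_\ell(x)$ the integral for $\mc G_\ell(\upbar u_\ell,\upbar\phi_\ell)$ over $(-\ell,\ell)$ becomes an integral over $(u_-,u_+)$; under $r=\upbar u(x)$ the integral for $\mc G(\upbar u,\upbar\phi)$ over $\bb R$ becomes an integral over the \emph{same} interval $(u_-,u_+)$, with no remainder. After using \eqref{Jel2} to rewrite the piece $2\ell\log(J_\ell/J)$ as an $r$-integral, one gets
\[
\mc G_\ell(\upbar u_\ell,\upbar\phi_\ell)-\mc G(\upbar u,\upbar\phi)
=\int_{u_-}^{u_+}\frac{1}{r(1-r)}\Big[\log\frac{r(1-r)-J_\ell}{r(1-r)-J}
+\frac{J_\ell}{r(1-r)-J_\ell}\log\frac{J_\ell}{J}\Big]\,dr\,,
\]
a genuine function of $E_\ell$ alone. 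Your Route~1 (bulk/tail split in $x$) is in fact worse than you suggest: since $s(\phi')\sim\phi'\log\phi'$ and $\upbar\phi'(x)\sim\mathrm{const}\cdot e^{-\alpha|x|}$, the tail $\int_{|x|>\ell}$ is of order $\ell e^{-\alpha\ell}$, not $e^{-\alpha\ell}$, and would have to cancel against a matching piece of the bulk.

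Second, ``Taylor-expand $H$ at $E=0$'' does not work, and your own $O(E\log E)$ worry is the reason: the integrand above is singular at $r=u_\pm$ when $E_\ell=0$, so $H'(0)$ in the naive sense is a divergent integral. The paper instead performs the boundary-layer rescaling $r=(1+\alpha-\epsilon_\ell s)/2$ with $\epsilon_\ell=4E_\ell$ (using the symmetry about $r=1/2$), which converts the expression into $\epsilon_\ell$ times an $s$-integral over $(0,\alpha/\epsilon_\ell)$ with a finite limit as $\epsilon_\ell\downarrow 0$. Evaluating that limiting integral gives $\frac{2}{\alpha(1-\alpha^2)}$, and combining with $e^{\alpha\ell}\epsilon_\ell\to 4\alpha^2$ from Lemma~\ref{t:jell} yields $8\alpha/(1-\alpha^2)$. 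The constant therefore comes from this endpoint blow-up in $r$, not from any $x$-space tail.
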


\begin{proof} 
  Recall that $\upbar u_\ell$ and $\upbar u$ satisfy \eqref{curr}
  with constants $J_\ell$ and $J$, respectively. Recall also that
  $\upbar\phi_\ell=s'(\upbar u_\ell)$. 
  From the very definition \eqref{g=}  of $g$ we  deduce
  \begin{equation}
    \label{g=2}
    g \big(\upbar u_\ell,\upbar\phi_\ell,\upbar\phi_\ell' \big)
    = \log J_\ell
    + \frac {\upbar u_\ell'}{\upbar u_\ell (1-\upbar u_\ell)}
    \log \frac {\upbar u_\ell(1-\upbar u_\ell) - J_\ell}
    {J_\ell} \: ,
  \end{equation}
  both in the case $\ell\in (0,+\infty)$ and $\ell=+\infty$.  Note
  also that $g(u_+,\phi_+,0)=g(u_-,\phi_-,0)=\log J$.
  From \eqref{g=2} we get
  \begin{equation}
    \label{s3}
    \begin{split}
      &\mc G_\ell(\upbar u_\ell,\upbar \f_\ell) -\mc G(\upbar u,\upbar \f)
      \\
      &\qquad = 2\ell \, \log \frac {J_\ell}{J}
      + \int_{u_-}^{u_+} \frac{1}{r(1-r)} 
      \log\Big[  \frac { r(1-r) -J_\ell}{J_\ell} 
      \frac {J}{ r(1-r) -J}  \Big] \, dr 
      \\
      &\qquad = \int_{u_-}^{u_+} \frac{1}{r(1-r)} \Big[ \log
      \frac{r(1-r) - J_\ell}{r(1-r) - J} +\frac
      {J_\ell}{r(1-r)-J_\ell} \log\frac{J_\ell}{J} \Big] \,dr \: ,
    \end{split}
  \end{equation}
  where we used \eqref{Jel2} in the second step.

  Set $\epsilon_\ell := 4(J -J_\ell)$ and observe that
  $\epsilon_\ell\downarrow 0$ as $\ell\to \infty$.  By using the
  symmetry of the function $r(1-r)$ with respect to $r=1/2$ and
  performing the change of variable $r=(1+\alpha -\epsilon_\ell s)/2$
  in \eqref{s3} we deduce, recalling $J=(1-\alpha^2)/4$,
  \begin{equation*}
    \begin{split}
      &\mc G_\ell(\upbar u_\ell,\upbar \phi_\ell) -\mc G(\upbar u,\upbar\phi) 
      = 4 \, \epsilon_\ell \int_0^{\alpha/\epsilon_\ell} 
      \!\!\! \frac {1}{1-\alpha^2 + 2 \alpha \epsilon_\ell s -\epsilon_\ell^2s^2}
      \\
      &\qquad\qquad \times \Big[ \log \frac{1+2\alpha s -\epsilon_\ell
        s^2}{2\alpha s -\epsilon_\ell s^2} +\frac {1-\alpha^2
        -\epsilon_\ell}{1+2\alpha s -\epsilon_\ell s^2 } \:
      \epsilon_\ell^{-1} \log \Big( 1-
      \frac{\epsilon_\ell}{1-\alpha^2}\Big) \Big] \,ds\;.
    \end{split}
  \end{equation*}
  It is now simple to check that
  \begin{equation*}
    \lim_{\ell\to\infty} \epsilon_\ell^{-1} 
    \Big[ \mc G_\ell(\upbar u_\ell,\upbar\phi_\ell) 
    -\mc G(\upbar u,\upbar\phi) \Big]
    = \frac{4}{1-\alpha^2} \int_0^{+\infty}
    \Big[  \log \frac{1+2\alpha s}{2\alpha s}
    -\frac {1}{1+2\alpha s} \Big] \,ds 
  \end{equation*}
  and hence, computing the integral on the right hand side above, we
  get
  \begin{equation*}
    \lim_{\ell\to\infty} \epsilon_\ell^{-1} 
    \Big[ \mc G_\ell(\upbar u_\ell,\upbar\phi_\ell) 
    -\mc G(\upbar u,\upbar\phi) \Big]
    = \frac {2}{\alpha(1-\alpha^2)} \: ,
  \end{equation*}
  which, in view of Lemma~\ref{t:jell} and the definition of $\epsilon_\ell$, concludes the proof.
\end{proof}

We have now collected all the ingredients needed to complete the proof
of the development by $\Gamma$-convergence.

\begin{proof}[Proof of Theorem \ref{sviluppo}]
  Given $z\in \bb R$ and $\ell>|z|$, we introduce the function
   $u^{(z)}_\ell :\bb R\to (0,1)$ defined by
  \begin{equation}
    \label{recseq}
    u^{(z)}_\ell(x) :=
    \begin{cases}
      u_-   & \textrm{ for $x\in (-\infty,-\ell)$},\\
      \upbar u_{\ell+z} (x-z) & \textrm{ for $x\in [-\ell,z)$},\\
      \upbar u_{\ell-z} (x-z) & \textrm{ for $x\in [z,\ell)$},\\
      u_+ & \textrm{ for $x\in [\ell,+\infty)$.}
    \end{cases}
  \end{equation}
  Moreover, we set $\phi^{(z)}_\ell := s'(u^{(z)}_\ell)$.  
  Observe that $u^{(z)}_\ell$ is a continuous piecewise smooth
  function and $u^{(0)}_\ell=\upbar u_\ell$.  Moreover, by
  construction we clearly have
  \begin{equation}
    \label{conslemma}
    \mc G_\ell (u^{(z)}_\ell, \phi^{(z)}_\ell) 
    = \frac 12 \Big[ 
    \mc G_{\ell+z} (\upbar u_{\ell + z},\upbar\phi_{\ell + z}) 
    + \mc G_{\ell-z} (\upbar u_{\ell - z},\upbar\phi_{\ell - z}) \Big]\;.
  \end{equation}
  Finally, as it is easy to verify 
  \begin{equation}
    \label{minuz}
    \mc G_\ell (u^{(z)}_\ell, \phi^{(z)}_\ell) 
    = \min \big\{ \mc G_\ell (u,\phi) \,, \: 
    (u,\phi) \in \mc X\,:\: \phi (z) = 0 \big\}\;.  
  \end{equation}

  \smallskip
  \noindent \emph{Proof of \emph{(i)}.}  Fix a sequence
  $\{(u_\ell,\phi_\ell)\}\subset \mc X$ such that
  $\limsup_{\ell\to\infty} \hat{\mc G_\ell}(u_\ell,\phi_\ell)<+\infty$
  and let $z_\ell\in (-\ell,\ell)$ be such that $\phi_\ell(z_\ell) =
  0$.  From \eqref{minuz} we deduce
  \begin{equation*}
    \limsup_{\ell\to\infty} e^{\alpha \ell} 
    \big[ \mathcal {G}_\ell (u_\ell^{(z_\ell)},\phi_\ell^{(z_\ell)}) 
    - \mc G(\upbar u, \upbar\phi) \big] 
    \le  \limsup_{\ell\to\infty} \hat{\mc G_\ell}(u_\ell,\phi_\ell)
    < +\infty\;.
  \end{equation*}
  By \eqref{conslemma} we thus have
  \begin{equation*}
    \limsup_{\ell\to\infty} e^{\alpha \ell} 
    \big[ \mathcal{G}_{\ell+z_\ell} 
    (\upbar u_{\ell+z_\ell},\upbar \phi_{\ell + z_\ell}) 
    -  \mc G(\upbar u, \upbar \f) 
    +  \mathcal {G}_{\ell-z_\ell}
    (\upbar u_{\ell - z_\ell},\upbar \phi_{\ell -z_\ell}) 
    -  \mc G(\upbar u, \upbar \f) \big] < +\infty. 
  \end{equation*}
  This bound together with Lemma~\ref{lemma} yields the
  uniform boundedness of the sequence $\{z_\ell\}$. Hence,
  by taking a subsequence, $z_\ell \to z$ for some $z\in \bb R$.
  From the equi-coercivity modulo translations in item (ii) of 
  Theorem~\ref{t:2} we then deduce the precompactness of
  the sequence  $\{(u_\ell,\f_\ell)\}$. Since its limit points are
  necessarily minimizers of $\mc G$, we get 
  $(u_\ell,\f_\ell)\to (\tau_{z} \upbar u,\tau_{z} \upbar
  \phi)$.

  \smallskip
  \noindent \emph{Proof of \emph{(ii)}.} 
  In order to prove the $\Gamma$-limsup inequality, let $(u,\f)\in \bb
  R$, and assume without loss of generality that $(u,\phi)=(\tau_z
  \upbar u, \tau_z\upbar\phi)$ for some $z\in\bb R$. We claim that
  $\{(u^{(z)}_\ell, \phi^{(z)}_\ell)\}$, with $u^{(z)}_\ell $ as
  defined in \eqref{recseq} and $\phi^{(z)}_\ell= s'(u^{(z)}_\ell)$,
  is a recovery sequence.  Indeed, from \eqref{conslemma} and Lemma
  \ref{lemma} it immediately follows that
  \begin{equation}
    \label{gls}
      \begin{split}
     & \lim_{\ell\to\infty} 
     \hat{\mc G}_\ell(u_\ell^{(z)},\phi_\ell^{(z)})
     = \lim_{\ell\to\infty} e^{\alpha\ell}
    \Big[ \mc G_\ell(u_\ell^{(z)},\phi_\ell^{(z)}) -\mc G(\upbar
    u,\upbar \f) \Big]\\
    &\quad = \frac 12 \lim_{\ell\to\infty} e^{\alpha \ell} 
    \big[ \mathcal{G}_{\ell+z} (\upbar u_{\ell + z},\upbar \phi_{\ell+ z}) 
    - \mc G(\upbar u, \upbar \f) +
    \mathcal {G}_{\ell-z} (\upbar u_{\ell - z},\upbar \phi_{\ell-z}) 
    -  \mc G(\upbar u, \upbar \f) \big]
    \\
    &\quad = \frac {8\alpha}{1-\alpha^2} \frac{e^{\alpha z}
      +e^{-\alpha z}}{2}
    = \frac {8\alpha}{1-\alpha^2} \cosh(\alpha z)\;.
  \end{split}
  \end{equation}

  To prove the $\Gamma$-liminf inequality, let $(u_\ell,\f_\ell) \to
  (u,\f)\in \mc X$. By the compactness properties stated in item (ii)
  of Theorem \ref{t:2}, we can assume without loss of generality that 
  $(u,\phi) = (\tau_z \upbar u,\tau_z \upbar \f)$ for some $z\in\bb
  R$. 
  Let $z_\ell \in \f_\ell^{-1}(\{0\})$, $\ell>0$.  Since $\phi_\ell$
  converges uniformly to $\f$, we have $z_\ell\to z$.
  Observe that \eqref{gls} holds also if $z$ on the left hand side 
  is replaced by a sequence $z_\ell\to z$. Therefore, by \eqref{minuz} 
  we deduce
  \begin{equation*}
    \begin{split} 
      \liminf_{\ell\to\infty} 
     \hat{\mc G}_\ell(u_\ell,\phi_\ell)
     &= \liminf_{\ell\to\infty} e^{\alpha \ell} 
     \big[ \mathcal{G}_\ell (u_{\ell},\phi_{\ell} ) - \mc G( u, \f)
     \big] 
     \\
     &
     \ge \liminf_{\ell\to\infty} 
     e^{\alpha \ell} 
     \big[ \mathcal {G}_\ell (u^{ (z_\ell)}_{\ell},
     \phi^{(z_\ell)}_{\ell}) -  \mc G( u,  \f) \big] 
    = \frac {8\alpha}{1-\alpha^2} \cosh(\alpha z),
  \end{split}
  \end{equation*}
  that proves the $\Gamma$-liminf inequality.
\end{proof}

\end{document}